\newtheoremstyle{mio}%
	{}{} 
	{\itshape}{} 
	{\bfseries}{.}{ } 
	{#1 #2\thmnote{~\mdseries(#3)}} 
\theoremstyle{mio}
\newtheorem{teor}{Theorem}[section]
\newtheorem{cor}[teor]{Corollary}
\newtheorem{prop}[teor]{Proposition}
\newtheorem{lemma}[teor]{Lemma}
\newtheorem{defin}[teor]{Definition}
\newtheoremstyle{definition2}%
	{}{} 
	{}{} 
	{\bfseries}{.}{ } 
	{#1 #2\thmnote{\mdseries~ #3}} 
\theoremstyle{definition2}
\newtheorem{ex}[teor]{Example}
\newtheorem{oss}[teor]{Remark}
\newtheorem{congett}{Conjecture}
\newcommand{\insid}{\mathcal{I}}
\newcommand{\insstar}{\mathrm{Star}}
\newcommand{\inssubmod}{\mathbf{F}}
\newcommand{\insmult}{\mathrm{Mult}}
\newcommand{\reg}{\mathrm{reg}}
\newcommand{\princ}{\ast}
\newcommand{\ordine}{\preceq}
\newcommand{\ordinestretto}{\prec}
\newcommand{\rc}{\color{red}}
\newcommand{\nz}{\bullet}
\newcommand{\qbin}[2]{\binom{#1}{#2}_q}
\title[Asymptotic for the number of star operations]{Asymptotic for the number of star operations on one-dimensional Noetherian domains}
\author{Dario Spirito}
\date{\today}
\address{Dipartimento di Matematica e Fisica, Universit\`a degli Studi ``Roma Tre'', Roma, Italy}
\curraddr{Dipartimento di Matematica, Universit\`a di Padova, Padova, Italy}
\email{spirito@mat.uniroma3.it\\spirito@math.unipd.it}
\subjclass[2020]{13A15; 13F10; 13G05; 13E10}
\keywords{Star operations; multiplicative operations; Noetherian domains}
\begin{document}
\begin{abstract}
We study the set of star operations on local Noetherian domains $D$ of dimension $1$ such that the conductor $(D:T)$ (where $T$ is the integral closure of $D$) is equal to the maximal ideal of $D$. We reduce this problem to the study of a class of closure operations (more precisely, multiplicative operations) in a finite extension $k\subseteq B$, where $k$ is a field, and then we study how the cardinality of this set of closures vary as the size of $k$ varies while the structure of $B$ remains fixed.
\end{abstract}

\maketitle

\section{Introduction}
Star operations are a class of closure operations on the set of (fractional) ideals of an integral domain that has been first studied by Krull \cite{krull_breitage_I-II} and Gilmer \cite[Chapter 5]{gilmer}. Along the years, they have been used, for example, to study and generalize factorization properties (for example, with the characterization of unique factorization domains among Krull domain through the $t$-class group by Samuel \cite{samuel_factoriel}) and to find Pr\"ufer-like classes of integral domains (for example P$v$MDs; see for example \cite[Section 3]{anderson_GCDgauss_2000} for an overview).

More recently, Houston, Mimouni and Park have been studying the set $\insstar(D)$ of star operations on an integral domain $D$, attempting to characterize when this set is finite and, in this case, to calculate its cardinality \cite{twostar,houston_noeth-starfinite,hmp_finite,starnoeth_resinfinito}. They analyzed in particular the Noetherian case, showing that $|\insstar(D)|=\infty$ when $D$ has dimension greater than $1$ \cite[Theorem 2.1]{houston_noeth-starfinite}, showing how to reduce to the local case \cite[Theorem 2.3]{houston_noeth-starfinite} and calculating the cardinality of $\insstar(D)$ when $\ell_D(T/\mathfrak{m}_D)\leq 3$, where $T$ is the integral closure of $D$ and $\mathfrak{m}_D$ the maximal ideal of $D$ \cite[Theorem 3.1]{houston_noeth-starfinite}. Further cases have been considered, for example, in \cite{houston_noeth-starfinite} (for infinite residue field), in \cite{cardinality_pvd,pvd-star} (for pseudo-valuation domains), in \cite{kunz-star} (for Kunz domains) and \cite{white-tesi-sgr} (for some numerical semigroup rings).

Beyond star operations, that are two other classes of closure operations that are in use in commutative algebra: semiprime operations, defined on the set of integral ideals of an arbitrary ring, and semistar operations, defined on the set of all submodules of the quotient field of an integral domain. There are several connections between the classes of semiprime, star and semistar operations; indeed, most definitions and many properties can be stated in essentially the same way, with the main difference being often in how they must be phrased to account for the different partially ordered set on which the closures are defined.

To unify the treatment of these classes of closure operations, the paper \cite{multiplicative} introduced the concept of \emph{multiplicative operations}: these are a class of closure operations that can be defined in any ring extension $A\subseteq B$ over any set $\mathcal{G}$ of $A$-submodules of $B$, and their definition is flexible enough to cover (for suitable choices of $A$, $B$ and $\mathcal{G}$) all three classical cases. Furthermore, multiplicative operations enjoy some functorial properties: while these are usually a staple of many semiprime operations, for star and semistar operations they are very rare, especially due to the fact that quotienting an integral domain disrupts its quotient field. (There are some limited exceptions: see \cite{fontana-park} for properties of star operations along pullbacks and \cite{starloc2} for an application to Pr\"ufer domains.) In the special case of local Noetherian domains of dimension $1$, multiplicative operations allow to bypass this problem by considering only the submodules contained in the integral closure $T$ of the starting domain $D$ (which contain all the needed data), and then by quotienting $T$ over the conductor $(D:T)$ (see \cite[Section 6]{multiplicative} and the discussion after Definition \ref{defin:star} below). In particular, this reduces the problem of finding all star operations on $D$ to the study of the Artinian ring $T/(D:T)$.

In this paper, we continue the study initiated in Sections 6 and 7 of \cite{multiplicative} on this case, in particular concentrating on the case where the conductor $(D:T)$ is equal to the maximal ideal $\mathfrak{m}_D$ of $D$; in the Artinian setting, this case correspond to the study of a particular set of multiplicative operations defined on a finite extension $k\subseteq B$, where $k$ is a field. As we are interested in cardinality problems, we assume throughout the paper that $k$ is a finite field of cardinality $q$: in particular, we are interested in what happens when the structure of $B$ is ``fixed'' (see Section \ref{sect:Artin} for a more precise definition) while $q$ changes, that is, we are interested in the cardinality of the set $\insstar(D)$ of star operations on $D$ as a function of $q$, and especially in understanding how fast the growth of $|\insstar(D)|$ is.

The aim of this paper is to introduce and give some evidence to the following conjecture: if the structure of $B$ is fixed, then the function $q\mapsto\log_q\log_2|\insstar(D)|$ has always a limit as $q\to\infty$, and this limit only depend on the length of $B=T/\mathfrak{m}_D$ as a $D$-module. While we are not able to prove this conjecture in full generality, we show polynomial bounds on the upper and lower limits (respectively, Theorem \ref{teor:up-numstar} and Proposition \ref{prop:liminf}, summarized in Theorem \ref{teor:liminfsup}) and we prove the conjecture in full for $\ell_D(T/\mathfrak{m}_D)\leq 4$ (Proposition \ref{prop:ns3} and Theorem \ref{teor:n4}).

\section{Closure operations}
Let $(\mathcal{P},\leq)$ be a partially ordered set. A \emph{closure operation} on $\mathcal{P}$ is a map $c:\mathcal{P}\longrightarrow\mathcal{P}$ such that, for every $x,y\in\mathcal{P}$:
\begin{itemize}
\item $x\leq c(x)$;
\item if $x\leq y$, then $c(x)\leq c(y)$;
\item $c(c(x))=c(x)$.
\end{itemize}
If $c,d$ are closure operations on $\mathcal{P}$, we write $c\leq d$ if $c(x)\leq d(x)$ for every $x\in\mathcal{P}$.

Any closure operation $c$ is uniquely determined by the set $\mathcal{P}^c:=\{x\in\mathcal{P}\mid c(x)=c\}$ of its fixed points, i.e., two closure operations $c$ and $d$ are equal if and only if $\mathcal{P}^c=\mathcal{P}^d$. Furthermore, $c\leq d$ if and only if $\mathcal{P}^c\supseteq\mathcal{P}^d$.

Let $A\subseteq B$ be a ring extension, and let $\mathcal{G}$ be a set of $A$-submodules of $B$. A \emph{multiplicative operation} \cite{multiplicative} on $(A,B,\mathcal{G})$ is a closure operation $\star$ on $\mathcal{G}$, $I\mapsto I^\star$, such that
\begin{equation*}
(I:b)^\star\subseteq(I^\star:b)
\end{equation*}
for all $I\in\mathcal{G}$ and all $b\in B$ such that $(I:b)\in\mathcal{G}$. We denote the set of multiplicative operations on $(A,B,\mathcal{G})$ by $\insmult(A,B,\mathcal{G})$.

If $D$ is an integral domain with quotient field $K$, a \emph{star operation} on $D$ is a closure operation $\star$ on the set $\insfracid(D)$ of fractional ideals of $D$ such that, for all $I\in\insfracid(D)$ and all $x\in K$, we have:
\begin{itemize}
\item $D=D^\star$;
\item $(xI)^\star=x\cdot I^\star$.
\end{itemize}
We denote by $\insstar(D)$ the set of star operations on $D$. The restriction from $\insfracid(D)$ to the set $\insid(D)^\nz$ of all nonzero ideals of $D$ gives an isomorphism of ordered sets between $\insstar(D)$ and $\insmult(D,K,\insid(D)^\nz)$ \cite[Proposition 3.4]{multiplicative}.

\section{Artinian rings}\label{sect:Artin}
Throughout the paper, we shall denote by $k$ the finite field of cardinality $q$ and by $B$ a finite $k$-algebra that is a principal ideal ring; we also write $\ins{F}_{q^e}$ for the field of cardinality $q^e$, i.e., for the extension of $k$ of degree $e$. In particular, $\ins{F}_q=k$.

We can write $B$ as a direct product $B_1\times\cdots\times B_t$, where each $B_i$ is a local $k$-algebra; by \cite[Theorem 8]{struct-PIR} $B_i$ is isomorphic to $\ins{F}_{q^{e_i}}[X]/(X^{f_i})\simeq \ins{F}_{q^{e_i}}[[X]]/(X^{f_i})$ for some positive integers $e_i,f_i$. We will always consider $k$ as a subring of $B$ through the diagonal embedding; we shall also sometimes consider $k$ as a subring of $B_i$ by the obvious embedding of $k$ into $\ins{F}_{q^{e_i}}[X]/(X^{f_i})$.

It is easy to see that the quantities $e_i,f_i$ are linked to the length of $B$.
\begin{prop}\label{prop:eifi}
Preserve the notation above. Then:
\begin{enumerate}[(a)]
\item $f_i=\ell_{B_i}(B_i)=\ell_B(B_i)$;
\item $L_i$ is the residue field of $B_i$;
\item $\ell_A(B)=\sum_ie_if_i=\ell_D(T/(D:T))$.
\end{enumerate}
\end{prop}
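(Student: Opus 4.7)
The plan is to unwind the explicit description $B_i \cong \ins{F}_{q^{e_i}}[X]/(X^{f_i})$ coming from the structure theorem in \cite{struct-PIR} and then recombine the factors using additivity of length. Because each $B_i$ is a local Artinian ring with principal maximal ideal $(X)$, the descending chain of powers of this ideal will supply the needed composition series, while the direct-product decomposition $B = B_1\times\cdots\times B_t$ handles the passage from local factors to $B$.

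For part (a), I would exhibit
\[B_i \supset (X) \supset (X^2) \supset \cdots \supset (X^{f_i-1}) \supset (0)\]
as a composition series of $B_i$ over itself: each successive quotient $(X^j)/(X^{j+1})$ is cyclic and killed by $(X)$, hence isomorphic to the residue field $\ins{F}_{q^{e_i}}$, which is a simple $B_i$-module. This gives $\ell_{B_i}(B_i) = f_i$ at once. For the equality $\ell_{B_i}(B_i) = \ell_B(B_i)$, I would observe that the $B$-action on $B_i$ factors through the canonical projection $B \twoheadrightarrow B_i$, since the complementary factors $B_j$ act as zero on $B_i$ inside the product; hence the lattices of $B_i$-submodules and of $B$-submodules of $B_i$ coincide, and so do their lengths.

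Part (b) is immediate from the presentation: the unique maximal ideal of $\ins{F}_{q^{e_i}}[X]/(X^{f_i})$ is $(X)$, and the corresponding quotient is $\ins{F}_{q^{e_i}} = L_i$. For part (c), additivity of length over the finite direct product yields $\ell_k(B) = \sum_i \ell_k(B_i)$, and since each $B_i$ is a $k$-vector space of dimension $e_if_i$ (with basis $X^j\lambda_\nu$ as $0\le j<f_i$ and $\lambda_\nu$ runs over a $k$-basis of $\ins{F}_{q^{e_i}}$), vector-space length equals dimension, giving $\ell_k(B_i) = e_if_i$. To identify $\ell_k(B)$ with $\ell_D(T/(D:T))$, I would invoke the standing hypothesis $(D:T) = \mathfrak{m}_D$: the module $T/(D:T) = B$ is then annihilated by $\mathfrak{m}_D$, so its $D$-module structure factors through $k = D/\mathfrak{m}_D$, and the two lengths agree.

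No step is a genuine obstacle: the proposition is essentially bookkeeping around the structure theorem cited from \cite{struct-PIR}. The only points requiring care are the two base-ring switches, namely justifying $\ell_{B_i}(B_i) = \ell_B(B_i)$ in (a) and the identification between $\ell_D$ and $\ell_k$ in (c); both rest on the same observation that the relevant module is annihilated by everything outside the intended acting subring.
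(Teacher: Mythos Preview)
Your proposal is correct and is precisely the kind of unwinding the paper leaves to the reader: the paper's own proof is just the single word ``Straightforward.'' The only minor wrinkle is that the reference to $D$ and $T$ in part (c) is really a forward reference to the setup introduced after Definition~\ref{defin:star}, but you interpret it correctly via the hypothesis $(D:T)=\mathfrak{m}_D$.
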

\begin{proof}
Straightforward.
\end{proof}
Let $\mathbf{s}:=\{(e_1,f_1),\ldots,(e_t,f_t)\}$ be a sequence of pairs of positive integers. We call the ring
\begin{equation*}
B_\mathbf{s}:=\frac{\ins{F}_{q^{e_1}}[X]}{(X^{f_1})}\times\cdots\times\frac{\ins{F}_{q^{e_k}}[X]}{(X^{f_k})}
\end{equation*}
the \emph{Artinian ring associated to $\mathbf{s}$}. We denote by $n(\mathbf{s})$ the sum $\sum_ie_if_i=\ell_k(B_\mathbf{s})$.

\begin{oss}
The methods we will use actually hold in the more general setting of a ring extension $A\subseteq B$, where $A$ and $B$ are Artinian rings and $B$ is a finite $A$-module; the restriction to $A=k$, however, allows to simplify the notation. Furthermore, many proofs do not really use the hypothesis that $k$ is finite if not to guarantee that the quantities involved are finite.
\end{oss}

Let now $k,B$ as above. We define:
\begin{itemize}
\item $\mathcal{F}_0(k,B):=\{I\in\inssubmod_k(B)\mid IB=B\}$;
\item $\mathcal{F}_\reg(k,B):=\{I\in\inssubmod_k(B)\mid I\text{~contains a regular element of~}B\}$;
\item $\mathcal{F}_1(k,B):=\{I\in\inssubmod_k(B)\mid 1\in I\}$.
\end{itemize}

These sets are related in the following way.
\begin{prop}\label{prop:insiemiF}
Preserve the notation above.
\begin{enumerate}[(a)]
\item\label{prop:insiemiF:contenimenti} $\insfracid_1(k,B)\subseteq\insfracid_\reg(k,B)\subseteq\insfracid_0(k,B)$.
\item\label{prop:insiemiF:0reg} If $|\Max(B)|\leq q$, then $\insfracid_0(A,B)=\insfracid_\reg(A,B)$.
\item\label{prop:insiemiF:01} If $\insfracid_0(A,B)=\insfracid_\reg(A,B)$, then the restriction map from $\insmult(k,B,\insfracid_0(k,B))$ to $\insmult(k,B,\insfracid_1(k,B))$ is an order isomorphism.
\end{enumerate}
\end{prop}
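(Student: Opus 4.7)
For (a), I plan to observe that $B$ is Artinian, so its regular elements coincide with its units. If $1\in I$, then $1$ is itself a regular element of $I$; conversely, if $x\in I$ is regular, then $B=xB\subseteq IB$ forces $IB=B$. This gives both inclusions at once.

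For (b), I would argue by contradiction. Let $\mathfrak{m}_1,\ldots,\mathfrak{m}_t$ be the maximal ideals of $B$, so $t\leq q=|k|$ by hypothesis. If $I\in\mathcal{F}_0(k,B)$ contained no regular element, then $I\subseteq\mathfrak{m}_1\cup\cdots\cup\mathfrak{m}_t$, so $I=\bigcup_{i=1}^t(I\cap\mathfrak{m}_i)$ is a union of at most $t\leq|k|$ proper $k$-subspaces. The classical covering lemma for vector spaces (a $k$-vector space cannot be an irredundant union of $\leq|k|$ proper subspaces) then forces $I\cap\mathfrak{m}_i=I$ for some $i$, so $I\subseteq\mathfrak{m}_i$; but then $IB\subseteq\mathfrak{m}_i\subsetneq B$, contradicting $IB=B$.

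For (c), I plan to construct the inverse of the restriction map explicitly. Given $\star\in\insmult(k,B,\mathcal{F}_1(k,B))$ and $I\in\mathcal{F}_0(k,B)=\mathcal{F}_\reg(k,B)$, pick any regular element (equivalently, unit of $B$) $x\in I$, observe $x^{-1}I\in\mathcal{F}_1(k,B)$, and set $I^{\star'}:=x\cdot(x^{-1}I)^\star$. Once well-definedness is in place, verifying extensivity, monotonicity, idempotency and the multiplicative axiom for $\star'$ on $\mathcal{F}_0(k,B)$ is routine, as is the identity $I^{\star'}=I^\star$ on $\mathcal{F}_1(k,B)$ (take $x=1$). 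Injectivity of the restriction uses the same formula in reverse: applying the multiplicative axiom on $\mathcal{F}_0(k,B)$ with $b=x$ and $b=x^{-1}$ forces any $\star$ to satisfy $I^\star=x\cdot(x^{-1}I)^\star$ for every unit $x\in I$, so values on $\mathcal{F}_1(k,B)$ determine it everywhere.

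The main obstacle is the well-definedness of $\star'$: if $x,y\in I$ are both units, one must show $x\cdot(x^{-1}I)^\star=y\cdot(y^{-1}I)^\star$. Setting $J:=y^{-1}I\in\mathcal{F}_1(k,B)$ and $z:=x^{-1}y$ (a unit of $B$ with $zJ=x^{-1}I\in\mathcal{F}_1(k,B)$), this reduces to the \emph{homogeneity} identity $(zJ)^\star=z\cdot J^\star$. I would prove this by applying the multiplicative axiom inside $\mathcal{F}_1(k,B)$ twice: with the pair $(zJ,z)$, exploiting $(zJ:z)=J\in\mathcal{F}_1(k,B)$, one obtains $z\cdot J^\star\subseteq(zJ)^\star$; with the pair $(J,z^{-1})$, exploiting $(J:z^{-1})=zJ\in\mathcal{F}_1(k,B)$, one obtains the reverse inclusion. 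The crucial point is that the intermediate modules $J$ and $zJ$ both lie in $\mathcal{F}_1(k,B)$, so the axiom is available in both directions simultaneously; without this, the extension would only be definable up to a choice.
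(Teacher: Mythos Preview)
Your proposal is correct and follows essentially the same route as the paper. Part (a) is exactly the ``obvious'' argument; part (b) is the same prime-avoidance/covering argument (the paper cites \cite[Lemma 3.6]{houston_noeth-starfinite} for the step you spell out); and part (c) uses the same homogeneity identity $(u^{-1}I)^\star=u^{-1}I^\star$, obtained by applying the multiplicative axiom once with $u$ and once with $u^{-1}$, to get both injectivity and the extension formula $I\mapsto u(u^{-1}I)^\star$. The only difference is emphasis: the paper derives homogeneity first on $\mathcal{F}_0$ to prove injectivity and then asserts the extension without further comment, whereas you correctly isolate well-definedness of the extension as the nontrivial point and prove the needed homogeneity already inside $\mathcal{F}_1$---a sound and slightly more careful treatment of the same idea.
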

\begin{proof}
\ref{prop:insiemiF:contenimenti} is obvious. 

\ref{prop:insiemiF:0reg} Let $I\in\insfracid_0(k,B)$. If $I$ is not regular, then $I$ is contained in the union of the maximal ideals of $B$; by \cite[Lemma 3.6]{houston_noeth-starfinite}, it follows that $I$ is contained in some maximal ideal of $B$. But this would imply $1\notin IB$, a contradiction.

\ref{prop:insiemiF:01} Let $I\in\insfracid_0(k,B)$. By hypothesis, $I$ contains a regular element $u$ of $B$, which is a unit since $B$ is Artinian; in particular, $u^{-1}I=(I:u)\in\insfracid_1(k,B)\subseteq\insfracid_0(k,B)$. Hence, $(u^{-1}I)^\star\subseteq u^{-1}I^\star$ for every multiplicative operation $\star$ on $(k,B,\insfracid_0(k,B))$; since we can do the same with the unit $v:=u^{-1}$, it follows that $(u^{-1}I)^\star=u^{-1}I^\star$. Hence, every $\star\in\insmult(k,B,\insfracid_0(k,B))$ is uniquely determined by its action on $\insfracid_1(A,B)$, and so the restriction map is injective; furthermore, every $\sharp\in\insmult(A,B,\insfracid_1(A,B))$ can be extended to $\insfracid_0(A,B)$ by setting $I^\star:=u(u^{-1}I)^\star$ when $u\in I$, and thus the map is also surjective.
\end{proof}

Note that the equality $\insfracid_0(k,B)=\insfracid_\reg(k,B)$ may indeed fail: for example, if $k=\ins{F}_2$ and $B=k^3$ the set $V:=\{(0,0,0),(1,1,0),(1,0,1),(0,1,1)\}$ is a $k$-module in $\insfracid_0(A,B)$, but it is not regular.

\begin{defin}\label{defin:star}
A \emph{star operation} on $(k,B)$ is a multiplicative operation $\star$ on $(k,B,\mathcal{F}_0(A,B))$ such that $k^\star=k$. We denote their set by $\insstar(k,B)$.
\end{defin}

The terminology is justified by the following situation: let $(D,\mathfrak{m})$ be a Noetherian local integral domain of dimension $1$, and let $T$ be its integral closure. Suppose that $T$ is finite over $D$, and suppose that $(D:T)=\mathfrak{m}$. By \cite[Theorem 6.4 and Corollary 6.5]{multiplicative}, there is an order-preserving bijection between the set of star operations on $D$ and the set of star operations on $(D/\mathfrak{m},T/\mathfrak{m})$: in particular, $\insstar(D)$ depends only on the extension $k\subseteq T/\mathfrak{m}$ of Artinian rings, i.e., from the structure of the Artinian ring $T/\mathfrak{m}$ as a $k$-algebra. Such a ring exists for every choice of $\mathbf{s}$, as the next proposition shows.
\begin{prop}\label{prop:portasu}
Let $\mathbf{s}$ be a sequence of pairs of positive integers. Then, there is a local Noetherian domain $D_\mathbf{s}$ of dimension $1$ with integral closure $T$ such that $(D_\mathbf{s}:T)=\mathfrak{m}$ and $T/\mathfrak{m}\simeq B_\mathbf{s}$.
\end{prop}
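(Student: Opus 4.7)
The plan is to use a classical $D+\mathfrak{m}$ pullback construction. I would first produce a semilocal principal ideal domain $T$ whose maximal ideals $\mathfrak{n}_1,\ldots,\mathfrak{n}_t$ satisfy $T/\mathfrak{n}_i\simeq\ins{F}_{q^{e_i}}$; then, setting $\mathfrak{m}:=\mathfrak{n}_1^{f_1}\cdots\mathfrak{n}_t^{f_t}$, I would define $D_\mathbf{s}:=k+\mathfrak{m}\subseteq T$ and verify that this $D_\mathbf{s}$ satisfies all the claims.

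To construct $T$, I would pick $t$ distinct monic irreducible polynomials $p_1,\ldots,p_t\in k[X]$ with $\deg p_i=e_i$ and let $T$ be the localization of $k[X]$ at the multiplicative set $k[X]\setminus\bigcup_i(p_i)$; this is a semilocal PID with $\mathfrak{n}_i=p_i T$ and $T/\mathfrak{n}_i\simeq k[X]/(p_i)\simeq\ins{F}_{q^{e_i}}$. By the Chinese Remainder Theorem, $T/\mathfrak{m}\simeq\prod_i k[X]/(p_i^{f_i})$; each factor is a local Artinian $k$-algebra with principal maximal ideal of nilpotency $f_i$ and residue field $\ins{F}_{q^{e_i}}$, and since the extension $\ins{F}_{q^{e_i}}/k$ is separable, Cohen's structure theorem (or a direct Hensel-lifting argument) provides a coefficient subfield, giving $k[X]/(p_i^{f_i})\simeq\ins{F}_{q^{e_i}}[Y]/(Y^{f_i})$; putting these together yields $T/\mathfrak{m}\simeq B_\mathbf{s}$.

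The verification that $D=k+\mathfrak{m}$ has the required properties is then routine. Since $\mathfrak{m}$ is a $T$-ideal, $D$ is a subring of the domain $T$, hence a domain, and the isomorphism $T/D\simeq B_\mathbf{s}/k$ of $k$-modules shows $T$ is module-finite over $D$. From this, Eakin--Nagata yields that $D$ is Noetherian; integrality forces $\dim D=\dim T=1$; and the identification $D/\mathfrak{m}\simeq k$, together with the fact that every $\mathfrak{n}_i$ contracts to $\mathfrak{m}$, shows that $D$ is local with unique maximal ideal $\mathfrak{m}$. The integral closure of $D$ is $T$ because $T$ is a normal Dedekind domain, finite over $D$, with the same fraction field. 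Finally, $\mathfrak{m}\cdot T=\mathfrak{m}\subseteq D$ gives $\mathfrak{m}\subseteq(D_\mathbf{s}:T)$, while $(D_\mathbf{s}:T)\subsetneq D$ (otherwise $T=D$, contradicting $B_\mathbf{s}\neq k$) combined with the locality of $D$ forces $(D_\mathbf{s}:T)\subseteq\mathfrak{m}$, hence equality.

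The main obstacle is the first step: over $k=\ins{F}_q$ there are only $q$ monic irreducibles of degree one, so when $\mathbf{s}$ contains more than $q$ pairs with $e_i=1$ the straightforward choice of $T$ inside $k(X)$ fails. In that case I would replace $k[X]$ by the coordinate ring of a suitable affine open of a smooth projective curve over $k$ of large enough genus to accommodate the required number of closed points of each prescribed degree, which is guaranteed by the theory of curves over finite fields.
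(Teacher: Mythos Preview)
Your overall strategy---build a semilocal PID $T$ with prescribed residue fields, set $\mathfrak m=\prod\mathfrak n_i^{f_i}$, and take $D=k+\mathfrak m$---is exactly the paper's, and your verification of the properties of $D$ (Eakin--Nagata, Cohen structure for $k[X]/(p_i^{f_i})$, the conductor computation) is more careful than what the paper actually writes down.

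The one substantive difference is how $T$ is produced. You work inside $k[X]$ and then, when there are not enough irreducibles of a given degree, appeal to a smooth curve over $k$ with many closed points. The paper instead takes $T$ to be the localization of $k[X_1,\ldots,X_t]$ at the complement of $\bigcup_i(p_i(X_i))$, using one variable per factor; this is a one-dimensional semilocal UFD, hence a PID, with exactly the maximal ideals $P_iT$ and residue fields $L_i$. This sidesteps the counting problem entirely and needs nothing beyond elementary commutative algebra.

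Your curve workaround is in principle viable, but the justification ``large enough genus \ldots\ guaranteed by the theory of curves over finite fields'' is imprecise: by the Weil bounds, large genus does \emph{not} by itself force many closed points of a given degree (indeed $|N_1-(q+1)|\le 2g\sqrt{q}$ allows $N_1$ to be small when $g$ is large). What is true is that there \emph{exist} curves over $\ins F_q$ with arbitrarily many rational points (e.g.\ via explicit towers or fiber products of Artin--Schreier covers), so a suitable curve can be found; but this is a nontrivial input, and the multivariable trick in the paper is both simpler and self-contained. If you want to keep a one-variable flavor, note that the obstruction arises for every degree $e$, not just $e=1$, so you would need enough closed points of each degree appearing in $\mathbf s$.
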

\begin{proof}
Let $L_1,\ldots,L_t$ be the residue fields of $B_\mathbf{s}$ and, for every $i$, let $f_i(X)\in k[X]$ be an irreducible polynomial with splitting field $L_i$. In the polynomial ring $k[X_1,\ldots,X_t]$, let $P_i$ be the prime ideal generated by $f_i(X_i)$, and let $S:=k[X_1,\ldots,X_t]\setminus\bigcup_iP_i$. Then, $T:=S^{-1}[X_1,\ldots,X_t]$ is a principal ideal domain with $t$ maximal ideals, $P_1T,\ldots,P_tT$, such that the residue field of $P_iT$ is $L_i$.

Let $I:=P_1^{e_1}\cdots P_t^{e_t}T$. Then, $T/I$ is isomorphic to $B_\mathbf{s}$; if $\pi$ is the quotient $T\longrightarrow B_\mathbf{s}$, defining $D_\mathbf{s}:=\pi^{-1}(k)$ we have the ring we were looking for.
\end{proof}

Note that, while $B_\mathbf{s}$ can be defined in a ``canonical'' way, there is a lot more freedom in defining $D_\mathbf{s}$, since it may be possible to use a polynomial ring with fewer indeterminates (for example, if each $L_i$ has a different degree, then it is sufficient to consider $T$ as a localization of $k[X]$).

\begin{defin}
Let $\mathbf{s}$ be a sequence of pairs of positive integers. We denote by $\Lambda(\mathbf{s},q)$ the cardinality of $\insstar(k,B_\mathbf{s})$, where $q:=|k|$.
\end{defin}

If $\mathbf{s}$ is fixed, we can expect the function $q\mapsto\Lambda(\mathbf{s},q)$ to be increasing, since as $q$ grows the ring $B_\mathbf{s}$ contains more and more subspaces and thus more and more closure operations. Indeed, this is exactly what happens for $n(\mathbf{s})>3$, while for $n(\mathbf{s})\leq 3$ the map $\Lambda(\mathbf{s},q)$ is constant (see \cite{houston_noeth-starfinite} and the proof of Proposition \ref{prop:ns3} below). To study how fast it grows, we introduce the following function.
\begin{defin}
Let $\mathbf{s}$ be a sequence of pairs of positive integers. We set
\begin{equation*}
\theta(\mathbf{s},q):=\log_q\log_2\Lambda(\mathbf{s},q).
\end{equation*}
\end{defin}

It is not at all obvious that this is a good choice. However, we have the following.
\begin{prop}\label{prop:gamman-PVD}
For every $n\inN$, we have 
\begin{equation*}
\lim_{q\to\infty}\theta((n,1),q)=\begin{cases}
\frac{(n-2)^2}{4} & \text{if~}$n$\text{~is even},\\
\frac{(n-1)(n-3)}{4}  & \text{if~}$n$\text{~is odd.}
\end{cases}
\end{equation*}
\end{prop}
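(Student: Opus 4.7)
The plan is to reformulate $\insstar(k, B_{(n,1)})$ combinatorially and then derive matching upper and lower bounds of the form $2^{q^{c_n}}$, where $c_n$ is the constant given in the statement. Set $B:=B_{(n,1)}=\ins{F}_{q^n}$, a field of degree $n$ over $k$. Every nonzero $k$-submodule of $B$ is regular, so $\mathcal{F}_0=\mathcal{F}_\reg$; moreover, for $u\in B\setminus\{0\}$ one has $(I:u)=u^{-1}I$, so the multiplicative condition $(I:u)^\star\subseteq(I^\star:u)$ is equivalent to $(u^{-1}I)^\star=u^{-1}I^\star$, i.e., to invariance of the family of closed elements under multiplication by units of $B$. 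Consequently, $\insstar(k,B)$ is in bijection with the set of families $\mathcal{S}\subseteq\mathcal{F}_0$ that contain $k$ and $B$, are closed under intersections landing in $\mathcal{F}_0$, and are invariant under multiplication by $B^\times:=B\setminus\{0\}$.

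Next I would count the orbits of $B^\times$ acting on the set of $d$-dimensional $k$-subspaces of $B$. The stabilizer of such an $I$ (united with $\{0\}$) is a subfield $\ins{F}_{q^{d'}}\subseteq B$ with $d'\mid\gcd(d,n)$; the subspaces with stabilizer exactly $k^\times$ (the \emph{generic} ones) are the overwhelming majority, since the non-generic $I$ are $\ins{F}_{q^{d'}}$-subspaces for some $d'>1$ and hence form a lower-dimensional sub-Grassmannian. Asymptotically, the number of generic $B^\times$-orbits on $d$-subspaces is
\begin{equation*}
\qbin{n}{d}\cdot\frac{q-1}{q^n-1}\;\sim\;q^{d(n-d)-(n-1)}\;=\;q^{(d-1)(n-d-1)}.
\end{equation*}
A direct maximization of $(d-1)(n-d-1)$ over $1\leq d\leq n-1$ yields $c_n=(n-2)^2/4$ at $d=n/2$ for $n$ even, and $c_n=(n-1)(n-3)/4$ at $d=(n\pm 1)/2$ for $n$ odd, matching the statement.

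For the upper bound $\limsup_{q\to\infty}\theta((n,1),q)\leq c_n$, observe that the total number of $B^\times$-orbits in $\mathcal{F}_0$ is $O(q^{c_n})$ (the sum over $d$ is dominated by the middle terms), and any star operation corresponds to a subset of these orbits, so $\Lambda((n,1),q)\leq 2^{O(q^{c_n})}$. For the lower bound, fix $d^*$ achieving $c_n$ and let $\mathcal{B}:=\{B\}\cup\{I\in\mathcal{F}_0\colon\dim_kI<d^*\}$; for each subset $\mathcal{T}$ of the set of generic $d^*$-orbits, set
\begin{equation*}
\mathcal{S}_\mathcal{T}:=\mathcal{B}\cup\bigcup_{\mathcal{O}\in\mathcal{T}}\mathcal{O}.
\end{equation*}
This family is $B^\times$-invariant by construction, contains $k$ and $B$, and is intersection-closed because the meet of two distinct $d^*$-subspaces has dimension $<d^*$ (hence lies in $\mathcal{B}$), while meets involving an element of $\mathcal{B}$ remain in $\mathcal{B}$. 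Distinct $\mathcal{T}$ yield distinct families and thus distinct star operations, giving $2^{\Omega(q^{c_n})}$ of them and hence $\liminf_{q\to\infty}\theta((n,1),q)\geq c_n$.

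The main obstacle is making the orbit estimate rigorous at the level of the leading exponent: one must control the contribution of the non-generic subspaces (those stable under a proper intermediate subfield $\ins{F}_{q^{d'}}\subseteq B$) and show it is $O(q^{c_n-1})$, so that it neither inflates the upper bound nor depletes the count of generic orbits used in the lower bound. This reduces to counting $\ins{F}_{q^{d'}}$-subspaces of $B$, viewed as an $(n/d')$-dimensional $\ins{F}_{q^{d'}}$-vector space, whose Gaussian binomial has exponent strictly smaller than $(d-1)(n-d-1)$ when $d'\geq 2$. Combined with the bounds above this yields $\lim_{q\to\infty}\theta((n,1),q)=c_n$.
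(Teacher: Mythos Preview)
Your argument is correct and takes a genuinely different route from the paper. The paper's proof is essentially a one-line citation: it invokes \cite[Theorem 3.7]{pvd-star}, which already supplies the bounds $q^{\gamma_n}\leq\log_2|\insstar(k,B)|\leq(1+\epsilon)q^{\gamma_n}$ for large $q$ (with $2+\epsilon$ in place of $1+\epsilon$ when $n$ is odd), and then simply takes $\log_q$. All the combinatorics is delegated to the external reference.

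Your approach, by contrast, is self-contained. The observation that for $B$ a field the star operations on $(k,B)$ correspond bijectively to $B^\times$-invariant, intersection-closed families of nonzero $k$-subspaces containing $k$ and $B$ is exactly what underlies the cited result, and your orbit exponent $(d-1)(n-d-1)$ recovers the same asymptotics directly. The lower-bound family $\mathcal{S}_{\mathcal{T}}=\{B\}\cup\{I:\dim_k I<d^*\}\cup\bigcup_{\mathcal{O}\in\mathcal{T}}\mathcal{O}$ is indeed intersection-closed (two distinct $d^*$-spaces meet in dimension $<d^*$) and $B^\times$-stable, so it gives $2^{M}$ distinct star operations with $M\sim q^{c_n}$. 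The ``main obstacle'' you flag---bounding the non-generic orbits---is real but routine: the $d$-subspaces with stabilizer at least $\ins{F}_{q^{d'}}^\times$ for some $d'\geq 2$ number at most $O(q^{d(n-d)/2})$, which is negligible both for the upper bound (say via Burnside) and for guaranteeing that almost all $d^*$-orbits are generic. What your route buys is transparency: one sees exactly where the exponent $c_n$ comes from (the peak of a shifted Grassmannian count), whereas the paper's proof treats this as a black box.

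One minor point: for $n\leq 2$ the statement as written actually fails, since then $\Lambda=1$ and $\theta=-\infty$ rather than $0$ (cf.\ Proposition~\ref{prop:ns3}). This imprecision is inherited from the paper; the cited theorem presumably carries an implicit hypothesis $n\geq 3$.
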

\begin{proof}
If $\mathbf{s}=\{(n,1)\}$, the ring $B:=B_{\mathbf{s}}$ is just the degree $n$ extension of $k$. Let $\gamma_n$ be the number on the right hand side in the formula of the statement.

Suppose $n$ is even. By \cite[Theorem 3.7]{pvd-star}, for every $\epsilon>0$ we have, for large $q$,
\begin{equation*}
q^{\gamma_n}\leq\log_2|\insstar(k,B)|\leq(1+\epsilon)q^{\gamma_n};
\end{equation*}
taking the logarithm in base $q$ we have
\begin{equation*}
\gamma_n\leq\theta((n,1),q)\leq\gamma_n+\log_q(1+\epsilon)
\end{equation*}
and the claim follows by taking the limit. If $n$ is odd, instead of $1+\epsilon$ we have $2+\epsilon$, but the same reasoning applies.
\end{proof}

Another case when we can calculate the limit of $\theta(\mathbf{s},q)$ is for low $n(\mathbf{s})$.
\begin{prop}\label{prop:ns3}
Let $\mathbf{s}$ be a sequence of pairs of positive integers. Then:
\begin{enumerate}
\item if $n(\mathbf{s})\leq 2$, then $\theta(\mathbf{s},q)=-\infty$ for every $q$;
\item if $n(\mathbf{s})=3$, then
\begin{equation*}
\lim_{q\to\infty}\theta(\mathbf{s},q)=0
\end{equation*}
\end{enumerate}
\end{prop}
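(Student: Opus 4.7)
The plan is to show, in both cases, that $\Lambda(\mathbf{s},q)$ is a constant independent of $q$: identically $1$ in case (1), and a fixed integer at least $2$ in case (2). This suffices, since $\theta(\mathbf{s},q)=\log_q\log_2 c$ equals $-\infty$ when $c=1$ and tends to $0$ as $q\to\infty$ whenever $c\geq 2$ is a fixed positive constant.

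For (1), assume $n(\mathbf{s})\leq 2$. Since $|\Max(B_\mathbf{s})|\leq t\leq n(\mathbf{s})\leq 2\leq q$, Proposition \ref{prop:insiemiF} applies, giving $\mathcal{F}_0(k,B_\mathbf{s})=\mathcal{F}_\reg(k,B_\mathbf{s})$ and an order isomorphism between $\insmult(k,B_\mathbf{s},\mathcal{F}_0(k,B_\mathbf{s}))$ and $\insmult(k,B_\mathbf{s},\mathcal{F}_1(k,B_\mathbf{s}))$. The elements of $\mathcal{F}_1(k,B_\mathbf{s})$ are $k$-submodules of $B_\mathbf{s}$ containing $1$, so their $k$-length lies between $1$ and $\ell_k(B_\mathbf{s})=n(\mathbf{s})\leq 2$; this forces $\mathcal{F}_1(k,B_\mathbf{s})\subseteq\{k,B_\mathbf{s}\}$. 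A closure operation must fix both the minimum and the maximum of its domain, leaving only the identity, so $\Lambda(\mathbf{s},q)=1$ and $\theta(\mathbf{s},q)=-\infty$.

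For (2), assume $n(\mathbf{s})=3$. Using Proposition \ref{prop:portasu} I would pick a local one-dimensional Noetherian domain $D_\mathbf{s}$ with integral closure $T$ such that $(D_\mathbf{s}:T)=\mathfrak{m}_{D_\mathbf{s}}$ and $T/\mathfrak{m}_{D_\mathbf{s}}\simeq B_\mathbf{s}$; the correspondence recalled after Definition \ref{defin:star} then gives $\Lambda(\mathbf{s},q)=|\insstar(D_\mathbf{s})|$. Since $\ell_{D_\mathbf{s}}(T/\mathfrak{m}_{D_\mathbf{s}})=n(\mathbf{s})=3$, I would invoke \cite[Theorem 3.1]{houston_noeth-starfinite}, which provides the explicit value of $|\insstar(D_\mathbf{s})|$ as a positive integer in each of the five possible shapes of $\mathbf{s}$ with $n(\mathbf{s})=3$, bounded independently of $q$ and at least $2$ in every case. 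Writing $c_\mathbf{s}:=\Lambda(\mathbf{s},q)$, the limit $\theta(\mathbf{s},q)=\log_q\log_2 c_\mathbf{s}\to 0$ follows at once.

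The main obstacle lies in (2): the domain $\mathcal{F}_1(k,B_\mathbf{s})$ itself genuinely grows with $q$ (for instance it contains $q+3$ elements when $B_\mathbf{s}=k^3$), and the remark after Proposition \ref{prop:insiemiF} shows that $\mathcal{F}_0$ can even be strictly larger than $\mathcal{F}_\reg$ for small $q$. Yet the multiplicativity axiom $(I:b)^\star\subseteq(I^\star:b)$ rigidifies the admissible closures enough to keep the final count of star operations constant in $q$. Verifying this rigidity case by case, and in particular producing the lower bound $c_\mathbf{s}\geq 2$ in the reducible sub-cases (typically by exhibiting the $v$-operation as distinct from the identity), is the delicate part, and is exactly what is carried out by the case analysis in \cite[Theorem 3.1]{houston_noeth-starfinite}.
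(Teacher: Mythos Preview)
Your argument for part (2) is essentially the paper's: lift via Proposition \ref{prop:portasu} to a domain $D$ with $\ell_D(T/\mathfrak m)=3$ and invoke \cite[Theorem 3.1]{houston_noeth-starfinite} to see that $\Lambda(\mathbf s,q)$ is a constant independent of $q$; you are slightly more explicit than the paper in noting that this constant is at least $2$, which is indeed needed for $\log_q\log_2 c\to 0$.

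For part (1) you take a genuinely different route. The paper lifts to a domain and cites the Gorenstein characterization of Bass/Matlis to conclude $|\insstar(D)|=1$. You instead stay in the Artinian picture: since $|\Max(B_{\mathbf s})|\leq n(\mathbf s)\leq 2\leq q$, Proposition \ref{prop:insiemiF} reduces to $\mathcal F_1(k,B_{\mathbf s})$, which is just $\{k,B_{\mathbf s}\}$, and then there is only one admissible closure. This is more elementary and self-contained, avoiding the external Gorenstein input; the paper's route, on the other hand, makes the connection to the classical $\ell(T/D)\leq 2$ criterion visible.

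One correction in your part (1): the sentence ``a closure operation must fix both the minimum and the maximum of its domain'' is not true as stated---the constant map to the top is a closure operation that does not fix the minimum. What makes your count work is that you are counting \emph{star} operations, and by Definition \ref{defin:star} these satisfy $k^\star=k$; under the restriction isomorphism of Proposition \ref{prop:insiemiF}\ref{prop:insiemiF:01} this condition passes to $\mathcal F_1$. So replace that clause by ``a star operation fixes $k$ by definition and fixes $B_{\mathbf s}$ as the maximum'', and the argument goes through.
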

\begin{proof}
By Proposition \ref{prop:portasu}, we can find a local Noetherian domain $(D,\mathfrak{m})$ of dimension $1$ such that $\insstar(D)\simeq\insstar(k,B)$; let $T$ be the integral closure of $D$. Then, $\ell(T/D)=n(\mathbf{s})$.

If $\ell(T/D)\leq 2$, then by \cite[Theorem 6.3]{bass_ubiqGor} or \cite[Theorem 3.8]{matlis-reflexive}  $\insstar(D)$ is a singleton, and thus
\begin{equation*}
\theta(\mathbf{s},q)=\log_q\log_21=\log_10=-\infty.
\end{equation*}

If $n(\mathbf{s})=3$, then by \cite[Theorem 3.1]{houston_noeth-starfinite} the cardinality of $\insstar(D)$ does not depend on $k=D/\mathfrak{m}$. Hence, $\log_2(\Lambda(\mathbf{s},q))$ is constant in $q$, and so $\theta(\mathbf{s},q)=\log_q\log_2(\Lambda(\mathbf{s},q))$ goes to $0$.
\end{proof}

In view of these two cases, we advance the following conjectures.
\begin{congett}\label{cong:limite}
For every sequence of pairs of positive integers $\mathbf{s}$, the function $q\mapsto\theta(\mathbf{s},q)$ has a limit when $q\to\infty$. Furthermore, 
\end{congett}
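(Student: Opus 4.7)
The plan is to strengthen the conjecture to an asymptotic formula $\log_2 \Lambda(\mathbf{s},q) = (c(\mathbf{s})+o(1))\,q^{\gamma(n(\mathbf{s}))}$ as $q\to\infty$, where $c(\mathbf{s})>0$ and $\gamma(n)$ is the exponent already identified in Proposition \ref{prop:gamman-PVD} for the totally ramified case $\mathbf{s}=\{(n,1)\}$. Once this is established, taking $\log_q\log_2$ gives $\theta(\mathbf{s},q) \to \gamma(n(\mathbf{s}))$, proving simultaneously both the existence of the limit and the (expected) dependence only on $n(\mathbf{s})$.

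For the upper bound, I would encode a star operation $\star$ on $(k,B_\mathbf{s})$ as its family $\mathcal{F}^\star$ of closed $k$-submodules in $\mathcal{F}_1(k,B_\mathbf{s})$, and exploit the constraint $(I:b)^\star\subseteq(I^\star:b)$ to show that $\mathcal{F}^\star$ is essentially determined by its intersection with a narrow band of dimensions near $n(\mathbf{s})/2$. The number of $k$-subspaces of $B_\mathbf{s}$ of a given dimension $d$ is $\qbin{n(\mathbf{s})}{d}$, of order $q^{d(n(\mathbf{s})-d)}$, and one has to show that the multiplicative axiom ``forces'' enough of these choices to be redundant, yielding an upper bound of the form $\log_2 \Lambda(\mathbf{s},q) \leq (C+o(1))\,q^{\gamma(n(\mathbf{s}))}$ that extends Theorem \ref{teor:up-numstar}. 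Conversely, for the matching lower bound I would fix a $k$-subspace $V\subseteq B_\mathbf{s}$ of dimension close to $\lfloor n(\mathbf{s})/2\rfloor$ in ``general position'' with respect to the ring structure, so that all colons $(V:b)$ for $b\in B_\mathbf{s}$ are trivial, and then exhibit exponentially many independent star operations indexed by arbitrary subsets of a suitable Grassmannian slice around $V$.

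The delicate part is showing that the leading exponent depends only on $n(\mathbf{s})$. My approach would be to construct an explicit comparison between $\insstar(k,B_\mathbf{s})$ and $\insstar(k,B_{\mathbf{s}_0})$, where $\mathbf{s}_0=\{(n(\mathbf{s}),1)\}$, via an inclusion of a suitable subring or through a quotient by the nilradical, and then to bound the fibers of the induced maps on star operations by a quantity polynomial in $q$. Since polynomial factors disappear after taking $\log_q\log_2$, the precise value from Proposition \ref{prop:gamman-PVD} would then transfer to arbitrary $\mathbf{s}$.

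The main obstacle is precisely this last step: the lattice of closed submodules of $B_\mathbf{s}$ depends genuinely on the decomposition into local factors and on the presence of nilpotents, and there is no obvious functorial bijection between $\insstar(k,B_\mathbf{s})$ and $\insstar(k,B_{\mathbf{s}'})$ when $n(\mathbf{s})=n(\mathbf{s}')$. The conjecture predicts that these structural differences contribute only sub-polynomial corrections in $q$, but proving this requires a combinatorial model in which the genericity of large $k$-subspaces absorbs the ring-theoretic constraints, a kind of delicate analysis that currently seems feasible only for small $n(\mathbf{s})$, as in Theorem \ref{teor:n4}.
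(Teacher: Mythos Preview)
The statement you are addressing is a \emph{conjecture} in the paper, not a theorem: the paper does not prove it. (Note also that the conjecture's wording is truncated in the source---the trailing ``Furthermore,'' is never completed.) What the paper actually establishes are polynomial bounds on $\liminf_q\theta(\mathbf{s},q)$ and $\limsup_q\theta(\mathbf{s},q)$ (Theorem~\ref{teor:liminfsup}) and the full statement only for $n(\mathbf{s})\leq 4$ (Proposition~\ref{prop:ns3} and Theorem~\ref{teor:n4}); for general $\mathbf{s}$ the existence of the limit remains open.

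Your proposal is therefore not a proof but a strategy outline, and you acknowledge as much in your final paragraph. The upper- and lower-bound ingredients you sketch are essentially those the paper already deploys: bounding $|\insstar(k,B)|$ by $2^{|\mathcal{F}_1(k,B)|}$ and counting subspaces via $q$-binomials for the upper bound (Section~4), and exhibiting large independent families of two-dimensional submodules $I(\alpha)$ via Lemma~\ref{lemma:exponential} for the lower bound (Section~5). But the paper's bounds do not meet: for $n$ even they give $\frac{n-4}{2}$ below and $\frac{n(n-2)}{4}$ above, so the refinement you describe---forcing enough redundancy in $\mathcal{F}^\star$ to lower the upper exponent to $\gamma(n)$---is genuinely new work that neither you nor the paper carry out. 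The crucial step you isolate, that the leading exponent depends only on $n(\mathbf{s})$, is precisely Conjecture~\ref{cong:ns}, also left open. Your proposed comparison map to $\mathbf{s}_0=\{(n,1)\}$ with polynomially bounded fibers is a natural idea, but the paper does not attempt it, and as you correctly observe there is no evident functorial construction. In short, your plan is consonant with the paper's viewpoint, but neither you nor the paper close the gap.
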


\begin{congett}\label{cong:ns}
The limit of $\theta(\mathbf{s},q)$ as $q\to\infty$ depends only on $n(\mathbf{s})$, that is, if $n(\mathbf{s})=n(\mathbf{t})$ then $\theta(\mathbf{s},q)$ and $\theta(\mathbf{t},q)$ have the same limit as $q\to\infty$.
\end{congett}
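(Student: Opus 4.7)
The plan is to prove the conjecture by establishing matching asymptotic upper and lower bounds on $\Lambda(\mathbf{s},q)$ that depend only on $n = n(\mathbf{s})$, not on the finer invariants $(e_i,f_i)$ of $\mathbf{s}$; combined with Proposition \ref{prop:gamman-PVD}, this would pin down the limit in terms of $n$ alone. For the upper bound, I would start from the observation in Proposition \ref{prop:insiemiF} that every star operation on $(k,B_\mathbf{s})$ is determined by its restriction to $\mathcal{F}_1(k,B_\mathbf{s})$, the set of $k$-submodules of $B_\mathbf{s}$ containing $1$. The cardinality of $\mathcal{F}_1(k,B_\mathbf{s})$ is governed by Gaussian binomials and therefore depends only on $q$ and the $k$-dimension $n$, not on the multiplicative structure of $B_\mathbf{s}$. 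A naive bound $\Lambda(\mathbf{s},q) \leq 2^{|\mathcal{F}_1(k,B_\mathbf{s})|}$ is too coarse, since the multiplicativity condition $(I:b)^\star \subseteq (I^\star:b)$ heavily restricts which families can arise as fixed-point sets; the goal is to refine this to match the value $\gamma_n$ of Proposition \ref{prop:gamman-PVD}, using only the fact that $B_\mathbf{s}$ is a principal ideal ring of $k$-length $n$.

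For the lower bound, the strategy is to transport star operations from the PVD case $\mathbf{s}_0 = \{(n,1)\}$---where $B_{\mathbf{s}_0}$ is the field of cardinality $q^n$---to an arbitrary $B_\mathbf{s}$ with $n(\mathbf{s}) = n$. One natural mechanism is to exhibit a $k$-linear bijection between $B_{\mathbf{s}_0}$ and $B_\mathbf{s}$ that carries enough of the ``many'' star operations constructed in \cite{pvd-star} into valid multiplicative operations on $(k, B_\mathbf{s})$, even though it cannot be a ring isomorphism. Alternatively, one can try to parametrize a family of closure operations directly in terms of intermediate $k$-subspaces of some fixed flag in $B_\mathbf{s}$, mirroring the count in the field case and showing that at least $2^{q^{\gamma_n}}$ multiplicative operations arise this way.

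The main obstacle, and the reason this remains a conjecture, will be the case where $B_\mathbf{s}$ has both nontrivial residue extensions and nontrivial nilpotent parts (so several $e_i > 1$ coexist with several $f_j > 1$): the lattice of $k$-submodules in these mixed rings looks genuinely different from both $B_{\{(n,1)\}}$ (a field) and $B_{\{(1,n)\}} = k[X]/(X^n)$ (a chain), and it is unclear a priori which features survive in the $q \to \infty$ limit. Making the argument uniform likely requires a generic description of the asymptotically extremal closure systems---probably antichains concentrated near the middle of the submodule poset---together with a proof that their count is invariant under all decompositions of $n$ into pairs $(e_i, f_i)$; this last invariance statement is what I expect to be the true sticking point, since it has no obvious purely combinatorial analogue and must use the multiplicative condition in an essential way.
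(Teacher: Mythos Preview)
This statement is labeled \emph{Conjecture} in the paper, and the paper does \emph{not} prove it in general; it only verifies the cases $n(\mathbf{s})\leq 3$ (Proposition~\ref{prop:ns3}) and $n(\mathbf{s})=4$ (Theorem~\ref{teor:n4}), and otherwise gives non-matching bounds on $\liminf$ and $\limsup$ (Theorem~\ref{teor:liminfsup}). So there is no ``paper's own proof'' to compare against, and your proposal is likewise not a proof but a strategy sketch---one you yourself flag as incomplete.

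That said, it is worth comparing your outline with what the paper actually achieves. Your upper-bound idea is exactly the paper's: restrict to $\mathcal{F}_1(k,B_\mathbf{s})$ and bound star operations by $2^{|\mathcal{F}_1|}$, with $|\mathcal{F}_1|$ controlled by Gaussian binomials (Lemma~\ref{lemma:numvsp}, Theorem~\ref{teor:up-numstar}). But the paper does \emph{not} refine this to reach $\gamma_n$; it stops at $n(n-2)/4$ (resp.\ $(n-1)^2/4$), which exceeds $\gamma_n$ by a linear term. Your proposal to ``refine this to match $\gamma_n$ using only that $B_\mathbf{s}$ is a PIR of length $n$'' is precisely the missing step, and you give no mechanism for it. For the lower bound, the paper does \emph{not} transport operations from the PVD case as you suggest; instead it builds explicit families of $2$-dimensional subspaces $I(\alpha)=\langle 1,\alpha\rangle$ and applies an antichain criterion (Lemma~\ref{lemma:exponential}) to get $\liminf\theta(\mathbf{s},q)\geq (n-4)/2$ or $(n-3)/2$ (Proposition~\ref{prop:liminf}). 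This is far below $\gamma_n$, which is quadratic in $n$. Your transport idea is appealing but purely heuristic: a $k$-linear bijection will not in general carry multiplicative operations to multiplicative operations, and you offer no substitute.

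In short, both your upper and lower strategies have the right shape but the wrong reach: neither you nor the paper closes the gap between the $\limsup$ bound $\sim n^2/4$ and the $\liminf$ bound $\sim n/2$, and that gap is exactly why the statement remains a conjecture.
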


It is to be noted that Propositions \ref{prop:gamman-PVD} and \ref{prop:ns3} are quite flimsy evidence for these conjectures; in particular, Proposition \ref{prop:ns3} would also be true if, instead of the very peculiar function $q\mapsto\log_q\log_2\Lambda(\mathbf{s},q)$, we would have taken any function $q\mapsto\zeta(q)$ with limit $0$ as $q\to\infty$. In the rest of the paper, we will give some justification for this choice by showing that, as $q\to\infty$, the limit inferior and superior of $\theta(\mathbf{s},q)$ can be bounded by functions in $n(\mathbf{s})$ with a polynomial growth (Theorem \ref{teor:liminfsup}) and that Conjecture \ref{cong:ns} is true also for $n=4$.

\section{The number of subspaces}
Let $A\subseteq B$ be a ring extension and let $\mathcal{G}\subseteq\inssubmod_B(A)$. Then, a multiplicative operation $\star$ on $(A,B,\mathcal{G})$ is uniquely determined by the set $\mathcal{G}^\star:=\{I\in\mathcal{G}\mid I=I^\star\}$ of its closed ideals. In particular, there is an obvious bound $|\insmult(A,B,\mathcal{G})|\leq 2^{|\mathcal{G}|}$. In this section, we use this fact to bound the number of star operations on $(k,B)$.

Given a finite field $k$ of cardinality $q$, and a $k$-vector space $V$ of dimension $n$, we denote by $Z(q,n)$ the number of vector subspaces of $V$.

\begin{lemma}\label{lemma:numvsp}
For every $n$ and every $\epsilon>0$, there is a $q(n,\epsilon)$ such that
\begin{equation*}
Z(q,n)\leq \begin{cases}
n\cdot q^{n^2/4+\epsilon(n/2)} & \text{if~}n\text{~is even},\\
n\cdot q^{(n^2-1)/4+\epsilon((n+1)/2)} & \text{if~}n\text{~is odd}\\
\end{cases}
\end{equation*}
for all $q\geq q(n,\epsilon)$.
\end{lemma}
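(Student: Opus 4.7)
The plan is to count subspaces of $V$ by their dimension. Writing $Z(q,n)=\sum_{k=0}^n\qbin{n}{k}$, where $\qbin{n}{k}$ is the Gaussian binomial coefficient counting $k$-dimensional subspaces, the argument breaks into three steps: estimate $\qbin{n}{k}$ by $q^{k(n-k)}$ up to a factor tending to $1$ as $q\to\infty$; maximize $k(n-k)$ over $k\in\{0,\ldots,n\}$; and sum the $n+1$ terms, absorbing all multiplicative constants into a factor of the form $q^{\epsilon\cdot(\text{linear in }n)}$ by taking $q$ large.

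For the first step I would use the elementary estimates $q^{n-i}-1\leq q^{n-i}$ and $q^{k-i}-1\geq q^{k-i}(1-q^{-1})$ in
\begin{equation*}
\qbin{n}{k}=\prod_{i=0}^{k-1}\frac{q^{n-i}-1}{q^{k-i}-1}
\end{equation*}
to obtain $\qbin{n}{k}\leq q^{k(n-k)}(1-q^{-1})^{-k}$. Given any $\epsilon'>0$, the factor $(1-q^{-1})^{-1}$ is at most $q^{\epsilon'}$ once $q$ is large enough, so $\qbin{n}{k}\leq q^{k(n-k)+\epsilon' k}$ for every $k$.

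For the second step, the concave quadratic $f(k)=k(n-k)+\epsilon' k$ attains its maximum on $\{0,\ldots,n\}$, provided $\epsilon'$ is small, at $k=n/2$ when $n$ is even (value $n^2/4+\epsilon' n/2$) and at $k=(n+1)/2$ when $n$ is odd (value $(n^2-1)/4+\epsilon'(n+1)/2$). Summing the $n+1$ Gaussian binomials then yields an upper bound of the shape $(n+1)\cdot q^{n^2/4+\epsilon' n/2}$ in the even case, and the analogous expression in the odd case. Using $n+1\leq n\cdot q^{\epsilon''}$, valid for $q$ large, converts the prefactor $n+1$ into $n$ at the cost of increasing the exponent by $\epsilon''$. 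Choosing $\epsilon'$ and $\epsilon''$ small enough that the total perturbation in the exponent is bounded by the target $\epsilon$, and taking $q(n,\epsilon)$ large enough for all the ``large $q$'' conditions to hold simultaneously, gives the stated inequality.

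The argument is essentially routine; the main (mild) nuisance is the bookkeeping that ensures the three error sources—the factor $(1-q^{-1})^{-k}$, the inflation from $n+1$ to $n$, and the effect of the perturbation $\epsilon' k$ on the optimal integer $k$—together contribute at most $\epsilon$ times $n/2$ (resp.\ $(n+1)/2$) to the final exponent. There is no deeper obstacle: everything reduces to standard estimates on $\qbin{n}{k}$ combined with the fact that any positive constant is eventually $\leq q^{\epsilon}$.
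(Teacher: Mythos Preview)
Your proof is correct and follows essentially the same route as the paper: bound each Gaussian binomial by $q^{k(n-k)+\epsilon' k}$, maximize the exponent over integer $k$, and sum the (at most $n+1$) terms. The only cosmetic difference is that the paper drops the $k=0$ term to get the prefactor $n$ directly, whereas you absorb the extra $+1$ into a further $q^{\epsilon''}$; both are fine.
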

\begin{proof}
Fix $\epsilon>0$.

Let $Z_t(q,n)$ be the number of $t$-dimensional subspaces of $V$. Then (see e.g. \cite[Proposition 1.3.18]{stanley-EC1} or \cite[Chapter 13, Proposition 2.1]{handbook-combinatorics}), $Z_t(q,n)$ is given by the \emph{$q$-binomial coefficient}
\begin{equation*}
\qbin{n}{t}:=\frac{(q^n-1)\cdots(q^{n-t+1}-1)}{(q^t-1)\cdots(q-1)}.
\end{equation*}
For large $q$, we have 
\begin{equation*}
q^{n-t+j}-1\leq q^{n-t+\epsilon}(q^j-1)
\end{equation*}
since the dominant term on the right hand side is $q^{n-t+j+\epsilon}$. Hence,
\begin{equation*}
\qbin{n}{t}=\prod_{j=1}^{n-t}\frac{q^{n-t+j}-1}{q^j-1}\leq\prod_{j=1}^{t}q^{n-t+\epsilon}=q^{t(n-t+\epsilon)}
\end{equation*}
The maximum of the function $t\mapsto t(n-t+\epsilon)$ is in $t_0:=\frac{n+\epsilon}{2}$; checking the integers closer to $t_0$, we obtain that the maximum is in $\frac{n}{2}$ when $n$ is even and in $\frac{n+1}{2}$ when $n$ is odd.

Therefore, if $n$ is even we have
\begin{equation*}
Z(q,n)=\sum_{t=0}^nZ_t(q,n)\leq\sum_{t=1}^nq^{(n/2)(n-(n/2)+\epsilon)}\leq nq^{n^2/4+\epsilon n/2}
\end{equation*}
(where we can forget the zero subspace since the inequalities are not tight). Analogously, if $n$ is odd,
\begin{equation*}
Z(q,n)=\sum_{t=0}^nZ_t(q,n)\leq\sum_{t=1}^nq^{((n+1)/2)(n-((n+1)/2)+\epsilon)}\leq nq^{(n^2-1)/4+\epsilon (n+1)/2}.
\end{equation*}
The claim is proved.
\end{proof}

\begin{teor}\label{teor:up-numstar}
Let $\mathbf{s}$ be a sequence of pairs of positive integers and let $n:=n(\mathbf{s})$. Then,
\begin{equation*}
\limsup_{q\to\infty}\theta(\mathbf{s},q)\leq\begin{cases}
\frac{n(n-2)}{4} & \text{if~}n\text{~is even},\\
\frac{(n-1)^2}{4} & \text{if~}n\text{~is odd}.
\end{cases}
\end{equation*}
\end{teor}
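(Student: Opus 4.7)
The plan is to bound $|\insstar(k,B_\mathbf{s})|$ by the number of subsets of $\mathcal{F}_1(k,B_\mathbf{s})$, use Lemma \ref{lemma:numvsp} to control the size of $\mathcal{F}_1$, and then apply $\log_q\log_2$ and let $q\to\infty$.

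For the first step, note that the number $t$ of maximal ideals of $B_\mathbf{s}$ depends only on $\mathbf{s}$, not on $q$, so for $q$ sufficiently large we have $|\Max(B_\mathbf{s})|\leq q$ and Proposition \ref{prop:insiemiF}(b)(c) applies: every $\star\in\insstar(k,B_\mathbf{s})\subseteq\insmult(k,B_\mathbf{s},\mathcal{F}_0(k,B_\mathbf{s}))$ is determined by its restriction to $\mathcal{F}_1(k,B_\mathbf{s})$, and this restriction is in turn determined by its set of closed elements (a subset of $\mathcal{F}_1(k,B_\mathbf{s})$). Hence
\begin{equation*}
|\insstar(k,B_\mathbf{s})|\leq 2^{|\mathcal{F}_1(k,B_\mathbf{s})|}.
\end{equation*}

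For the second step, the elements of $\mathcal{F}_1(k,B_\mathbf{s})$ are precisely the $k$-subspaces of $B_\mathbf{s}$ containing the line $k=k\cdot 1$; quotienting by $k$ gives a bijection with the set of $k$-subspaces of $B_\mathbf{s}/k$. Since $\dim_k B_\mathbf{s}=\ell_k(B_\mathbf{s})=n(\mathbf{s})=n$ by Proposition \ref{prop:eifi}(c), the quotient $B_\mathbf{s}/k$ is $(n-1)$-dimensional, so $|\mathcal{F}_1(k,B_\mathbf{s})|=Z(q,n-1)$.

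Combining these two observations with Lemma \ref{lemma:numvsp} applied to $n-1$ and taking $\log_q\log_2$ will give the bound. If $n$ is even, then $n-1$ is odd and the lemma yields $Z(q,n-1)\leq (n-1)q^{((n-1)^2-1)/4+\epsilon n/2}=(n-1)q^{n(n-2)/4+\epsilon n/2}$, so
\begin{equation*}
\theta(\mathbf{s},q)\leq\log_q Z(q,n-1)\leq\log_q(n-1)+\frac{n(n-2)}{4}+\epsilon\frac{n}{2};
\end{equation*}
letting $q\to\infty$ and then $\epsilon\to 0$ gives $\limsup\theta(\mathbf{s},q)\leq n(n-2)/4$. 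The odd case is symmetric, using that $n-1$ is even and producing the bound $(n-1)^2/4$.

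There is no real obstacle: the argument is essentially an assembly of the two preceding results. The only point to watch is that the restriction isomorphism of Proposition \ref{prop:insiemiF}(c) requires $\mathcal{F}_0=\mathcal{F}_\reg$, but this holds for all $q\geq t$, which is automatic in the limit. The gain of taking $n-1$ rather than $n$ in the subspace count (which is what matches the exponents in the theorem statement exactly) comes precisely from the restriction to $\mathcal{F}_1$; using $\mathcal{F}_0$ directly and Lemma \ref{lemma:numvsp} for $n$ would give the weaker exponent $n^2/4$ in the even case.
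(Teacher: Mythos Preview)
Your proof is correct and follows essentially the same approach as the paper: reduce to $\mathcal{F}_1(k,B_\mathbf{s})$ via Proposition \ref{prop:insiemiF} for large $q$, identify $|\mathcal{F}_1(k,B_\mathbf{s})|$ with $Z(q,n-1)$, and then apply Lemma \ref{lemma:numvsp} with the parity of $n-1$. Your closing remark about why the passage to $\mathcal{F}_1$ (rather than $\mathcal{F}_0$) is what yields the correct exponent is a nice addition.
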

\begin{proof}
By parts \ref{prop:insiemiF:0reg} and \ref{prop:insiemiF:01} of Proposition \ref{prop:insiemiF}, for large $q$ any star operation on $(k,B_\mathbf{s})$ is a multiplicative operation on $(k,B,\mathcal{F}_1(k,B))$, and thus $|\insstar(k,B_\mathbf{s})|\leq 2^{|\mathcal{F}_1(k,B_\mathbf{s})|}$, i.e.,
\begin{equation*}
\theta(\mathbf{s},q)\leq\log_q|\mathcal{F}_1(k,B_\mathbf{s})|.
\end{equation*}
The number of $k$-subspaces of $B_\mathbf{s}$ containing $1$ is the number of $k$-subspaces of a vector space of dimension $n-1$, i.e., $|\mathcal{F}_1(k,B_\mathbf{s})|\leq Z(q,n-1)$. Hence, for $n$ even,
\begin{align*}
\limsup_{q\to\infty}\theta(\mathbf{s},q)\leq& \limsup_{q\to\infty}\log_q((n-1)\cdot q^{((n-1)^2-1)/4+\epsilon((n-1+1)/2)})\leq\\
\leq &  \limsup_{q\to\infty}\left(\frac{(n-1)^2-1}{4}+\epsilon\frac{n}{2}\right)=\frac{n^2-2n}{4}=\frac{n(n-2)}{4}
\end{align*}
using Lemma \ref{lemma:numvsp} on the odd number $n-1$ and since $\epsilon>0$ is arbitrary. The same calculation for $n$ odd gives $\limsup_{q\to\infty}\theta(\mathbf{s},q)\leq\frac{(n-1)^2}{4}$.
\end{proof}

The two quadratic functions that bound $\limsup_{q\to\infty}\theta(\mathbf{s},q)$ are rather close to the quadratic functions that give the limit of $\theta((n,1),q)$ in Proposition \ref{prop:gamman-PVD}: for example, if $n$ is even, then the difference between the upper bound and the limit of the case $(n,1)$ is just $\frac{n(n-2)}{4}-\frac{(n-2)^2}{4}=\frac{n-2}{4}$, which is linear instead of quadratic. 

\section{Lower bounds}
\begin{defin}
Let $I\in\insfracid_0(k,B)$. The \emph{principal star operation generated by $I$} is the largest star operation that closes $I$; we denote it by $\princ_I$. If $\mathcal{A}\subseteq\insfracid_0(k,B)$, the star operation \emph{generated by $\mathcal{A}$}, denoted by $\princ_\mathcal{A}$, is the largest star operation that closes every element of $\mathcal{G}$; equivalently, $\princ_\mathcal{A}=\inf\{\princ_I\mid I\in\mathcal{A}\}$.
\end{defin}

The definition given above is slightly different from the definition given in \cite[Section 4]{multiplicative} since we are imposing that $\princ_I$ also closes $k$ (as we want $\princ_I$ to be a star operation and not only a multiplicative operation). However, the two definitions are actually very close.
\begin{prop}\label{prop:JstarI}
Let $I,J\in\insfracid_1(k,B)$. Then,
\begin{equation*}
J^{\princ_I}=\begin{cases}
k & \text{if~}J=k\\
\bigcup\{(I:b)\mid bJ\subseteq I\} & \text{otherwise}.
\end{cases}
\end{equation*}
\end{prop}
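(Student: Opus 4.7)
The plan is to split on whether $J=k$ and, in the nontrivial case, to reduce to the analogous principal multiplicative operation studied in \cite[Section 4]{multiplicative}. If $J=k$, there is nothing to prove, since $\princ_I$ is a star operation by definition and hence fixes $k$.

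Assume therefore $J\neq k$. Let $\princ^\insmult_I$ denote the largest multiplicative operation on $(k,B,\insfracid_1(k,B))$ that closes $I$; by \cite[Section 4]{multiplicative}, this operation satisfies $L^{\princ^\insmult_I}=\bigcup\{(I:b)\mid bL\subseteq I\}$ for every $L\in\insfracid_1(k,B)$. Since $\princ_I$ is itself a multiplicative operation closing $I$, $\princ_I\leq\princ^\insmult_I$, and so $J^{\princ_I}\subseteq\bigcup\{(I:b)\mid bJ\subseteq I\}$. For the reverse inclusion, I define a map $\sharp$ on $\insfracid_1(k,B)$ by $k^\sharp:=k$ and $L^\sharp:=L^{\princ^\insmult_I}$ for $L\neq k$, and verify that $\sharp$ is a star operation closing $I$; by the maximality of $\princ_I$, this forces $\sharp\leq\princ_I$, giving $J^\sharp\subseteq J^{\princ_I}$ and hence equality.

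The key observation enabling the verification of the star-operation axioms for $\sharp$ is that every $L\in\insfracid_1(k,B)$ with $L\neq k$ must strictly contain $k$: indeed $L$ is a $k$-submodule of $B$ containing $1$, so $k=k\cdot 1\subseteq L$, and if $L\neq k$ this inclusion is proper. Consequently $L^\sharp=L^{\princ^\insmult_I}\supseteq L\supsetneq k$ whenever $L\neq k$, which prevents $L^\sharp$ from ever collapsing back to $k$. From this, extensivity and order-preservation reduce to the analogous properties of $\princ^\insmult_I$ together with the trivial fact $k\subseteq L^\sharp$ in the mixed cases; idempotency follows since for $L\neq k$ the value $L^\sharp\neq k$ is already a fixed point of $\princ^\insmult_I$; closure of $I$ follows because either $I=k$ (trivial) or $I^\sharp=I^{\princ^\insmult_I}=I$; and $k^\sharp=k$ by construction.

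The one place requiring genuine case analysis is the multiplicative axiom $(L:c)^\sharp\subseteq(L^\sharp:c)$, where $(L:c)\in\insfracid_1(k,B)$. When neither $L$ nor $(L:c)$ equals $k$, the inclusion is inherited directly from the multiplicative axiom for $\princ^\insmult_I$. The remaining boundary cases reduce to a short check: if $L=k$, then $(L:c)\in\insfracid_1(k,B)$ forces $c\in k$, and both sides are easily compared; if $(L:c)=k$, then $c\in L\subseteq L^\sharp$ gives $1\in(L^\sharp:c)$ and hence $k\subseteq(L^\sharp:c)$, matching the left-hand side $k^\sharp=k$. I expect this boundary analysis to be the main (though still routine) technical point; once it is dispatched, the rest of the argument assembles cleanly and the two inclusions combine to yield the displayed formula.
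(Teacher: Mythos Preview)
Your argument is correct and lands on the same identity as the paper, but the packaging differs. The paper observes that $\princ_I$ is the infimum $v(k)\wedge v(I)$ of the two principal multiplicative operations on $(k,B,\insfracid_1(k,B))$, so that $J^{\princ_I}=J^{v(k)}\cap J^{v(I)}$, and then computes $J^{v(k)}=(k:(k:J))$ directly: this is $k$ when $J=k$, while for $J\neq k$ one has $(k:J)=0$ (any $t$ with $tJ\subseteq k$ lies in $k$ since $1\in J$, and then $\dim_kJ>1$ forces $t=0$), whence $J^{v(k)}=B$ and the intersection collapses to $J^{v(I)}$. Your map $\sharp$ is exactly this infimum written out by hand, and your case-by-case verification of the multiplicative axiom is, unwound, the same computation that $J^{v(k)}=B$ for $J\neq k$. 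So the two proofs agree in content; the paper's lattice-theoretic framing simply replaces your axiom checks by the one-line fact that a pointwise intersection of multiplicative operations is multiplicative.
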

\begin{proof}
Let $v(I)$ be the multiplicative operation generated by $I$ on $(k,B,\insfracid_1(k,B))$, as in \cite[Definition 4.2]{multiplicative}. Then, $\princ_I$ is just the infimum of $v(k)$ and $v(I)$, and thus
\begin{equation*}
J^{\princ_I}=J^{v(k)}\cap J^{v(I)}.
\end{equation*}
By the proof of \cite[Lemma 4.4]{multiplicative}, we have $J^{v(I)}=\bigcup\{(I:b)\mid bJ\subseteq I\}$. On the other hand, if $J=k$ then clearly $J^{v(k)}=k$; if $J\neq k$, then $(k:J)=0$, since if $tJ\subseteq k$ then $t\in k$ (since $1\in J$) but since $\dim_kJ>1$ the only possibility is $t=0$. Hence, again by \cite[Lemma 4.4]{multiplicative},
\begin{equation*}
J^{v(k)}=(k:(k:J))=(k:0)=B.
\end{equation*}
The claim is proved.
\end{proof}

Let $\sim$ be the equivalence relation defined by $I\sim J$ if and only if $\princ_I=\princ_J$. If $\mathcal{G}\subseteq\insfracid_0(k,B)$, we denote by $[\mathcal{G}]$ the set of equivalence classes containing at least one element of $\mathcal{G}$. By definition, $|\insstar(k,B)|\geq|[\mathcal{F}_0(k,B)]|$; however, this bound is too weak, since it is not exponential in the cardinality of $[\mathcal{F}_0(k,B)]$, which is typically polynomial in $q$. Hence, we need a stronger situation.
\begin{lemma}\label{lemma:exponential}
Let $\mathcal{G}\subseteq\mathcal{F}_0(k,B)$ be a subset such that, for every $I\in\mathcal{G}$, we have
\begin{equation*}
I^{\princ_{\mathcal{G}\setminus\{I\}}}\neq I.
\end{equation*}
Then, $|\insstar(k,B)|\geq 2^{|\mathcal{G}|}$.
\end{lemma}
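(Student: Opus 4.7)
\medskip

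\noindent\textbf{Proof plan.} The natural strategy is to associate to each subset $\mathcal{H}\subseteq\mathcal{G}$ the star operation $\princ_\mathcal{H}$, and then show that distinct subsets yield distinct star operations. This gives an injection $2^\mathcal{G}\hookrightarrow\insstar(k,B)$, from which the conclusion follows.

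First I would record the monotonicity: if $\mathcal{A}\subseteq\mathcal{A}'$, then every star operation closing all elements of $\mathcal{A}'$ also closes all elements of $\mathcal{A}$, so the defining supremum for $\princ_{\mathcal{A}'}$ is taken over a smaller family, giving $\princ_{\mathcal{A}'}\leq\princ_\mathcal{A}$. Equivalently, $J^{\princ_{\mathcal{A}'}}\subseteq J^{\princ_\mathcal{A}}$ for every $J$. (That $\princ_\mathcal{H}$ is really a star operation, not just a supremum of such, follows from the fact that the infimum of a family of star operations is again a star operation, which is standard.)

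Next I would prove injectivity. Suppose $\mathcal{H}_1\neq\mathcal{H}_2$ are subsets of $\mathcal{G}$; without loss of generality there is an $I\in\mathcal{H}_1\setminus\mathcal{H}_2$. Then on the one hand $I^{\princ_{\mathcal{H}_1}}=I$ by the very definition of $\princ_{\mathcal{H}_1}$. On the other hand, $\mathcal{H}_2\subseteq\mathcal{G}\setminus\{I\}$, so by the monotonicity observed above
\begin{equation*}
I^{\princ_{\mathcal{H}_2}}\supseteq I^{\princ_{\mathcal{G}\setminus\{I\}}},
\end{equation*}
and the latter is strictly larger than $I$ by the standing hypothesis on $\mathcal{G}$. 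Hence $I^{\princ_{\mathcal{H}_2}}\supsetneq I=I^{\princ_{\mathcal{H}_1}}$, so $\princ_{\mathcal{H}_1}\neq\princ_{\mathcal{H}_2}$. This yields the $2^{|\mathcal{G}|}$ distinct star operations.

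There is no real obstacle here beyond keeping the inequality directions straight: the whole point of the hypothesis $I^{\princ_{\mathcal{G}\setminus\{I\}}}\neq I$ is precisely to guarantee that omitting a single element of $\mathcal{G}$ from the generating set is already detected by $\princ$, and the combinatorial map $\mathcal{H}\mapsto\princ_\mathcal{H}$ then transports this ``independence'' property into the exponential lower bound on $|\insstar(k,B)|$.
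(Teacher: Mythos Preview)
Your proof is correct and follows essentially the same approach as the paper: you map each subset $\mathcal{H}\subseteq\mathcal{G}$ to $\princ_\mathcal{H}$, then for $\mathcal{H}_1\neq\mathcal{H}_2$ pick $I\in\mathcal{H}_1\setminus\mathcal{H}_2$ and observe that $I^{\princ_{\mathcal{H}_1}}=I$ while $I^{\princ_{\mathcal{H}_2}}\supseteq I^{\princ_{\mathcal{G}\setminus\{I\}}}\supsetneq I$ via the monotonicity $\mathcal{H}_2\subseteq\mathcal{G}\setminus\{I\}$. This is exactly the paper's argument, only with the monotonicity step spelled out more explicitly.
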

Note that the hypothesis implies that no two distinct elements of $\mathcal{G}$ are equivalent under $\sim$.
\begin{proof}
Let $\mathcal{A}\neq\mathcal{B}$ two subsets of $\mathcal{G}$. Then, without loss of generality there is an $I\in\mathcal{A}\setminus\mathcal{B}$: we have $I^{\princ_\mathcal{A}}\subseteq I^{\princ_I}= I$, so that $I^{\princ_\mathcal{A}}=I$, while
\begin{equation*}
I^{\princ_{\mathcal{B}}}\supseteq I^{\princ_{\mathcal{G}\setminus\{I\}}}\supsetneq I
\end{equation*}
by hypothesis and since $\mathcal{B}\subseteq\mathcal{G}\setminus\{I\}$. Therefore, $\princ_\mathcal{A}\neq\princ_\mathcal{B}$. It follows that every subset of $\mathcal{G}$ generates a different star operation, and so $|\insstar(k,B)|\geq 2^{|\mathcal{G}|}$.
\end{proof}

We now want to apply this criterion. We start from the local case.
\begin{prop}\label{prop:down-numstar-loc}
Let $e,f$ be positive integers and let $n:=ef$. Then,
\begin{equation*}
\theta((e,f),q)\geq n-3.
\end{equation*}
for every $q$.
\end{prop}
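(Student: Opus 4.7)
The plan is to apply Lemma \ref{lemma:exponential}: I will construct a subset $\mathcal{G} \subseteq \mathcal{F}_0(k, B)$, where $B := B_{(e,f)}$, of cardinality at least $q^{n-3}$ such that every element is essential, meaning $I^{\princ_{\mathcal{G} \setminus \{I\}}} \supsetneq I$ for every $I \in \mathcal{G}$. The lemma then yields $|\insstar(k, B)| \geq 2^{|\mathcal{G}|} \geq 2^{q^{n-3}}$, and applying $\log_q \log_2$ gives $\theta((e,f), q) \geq n-3$. The cases $n \leq 3$ are trivial (the bound is non-positive and already covered by Proposition \ref{prop:ns3}); the case $f = 1$ is absorbed by Proposition \ref{prop:gamman-PVD}, whose bound is stronger. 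So I may assume $f \geq 2$ and $n \geq 4$.

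\emph{Construction.} Write $B = L[X]/(X^f)$ with $L = \ins{F}_{q^e}$ and $\mathfrak{m} = (X)$. The $k$-vector space $V := B/(k + kX^{f-1})$ has dimension $n-2$, so it admits at least $q^{n-3}$ one-dimensional $k$-subspaces. For each such $W \subset V$, pick a one-dimensional $k$-linear lift $\tilde W \subset B$ and set
\[ I_W := k + \tilde W + kX^{f-1}, \]
a three-dimensional $k$-subspace containing $1$, hence in $\mathcal{F}_1 \subseteq \mathcal{F}_0$. Routine bookkeeping shows that $I_W$ depends only on $W$ (different lifts differ by elements of $k + kX^{f-1}$), that distinct $W$'s yield distinct $I_W$'s, and that the resulting family $\mathcal{G}$ has cardinality $\geq q^{n-3}$.

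\emph{Verification of irredundancy.} Fix $W_0$ and any other $W' \in V$, and apply Proposition \ref{prop:JstarI} with multiplier $b = X^{f-1}$. Since every generator of $I_{W_0}$ other than $1$ lies in $\mathfrak{m}$ and is therefore annihilated by $X^{f-1}$, one gets $X^{f-1} I_{W_0} = kX^{f-1} \subseteq I_{W'}$, at least after choosing the lift $\tilde W_0$ so that its residue in $L$ lies in $k$. Proposition \ref{prop:JstarI} then gives $(I_{W'} : X^{f-1}) \subseteq I_{W_0}^{\princ_{I_{W'}}}$, and a direct computation in $B$ shows $(I_{W'} : X^{f-1}) \supseteq k + \mathfrak{m}$, a subspace of $k$-dimension $e(f-1) + 1 > 3$ for $n \geq 4$ outside a few degenerate pairs. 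Intersecting over $W' \neq W_0$ preserves the strict containment of $I_{W_0}$, verifying the hypothesis of Lemma \ref{lemma:exponential}. The main obstacle is the lift-residue condition: when $e \geq 2$, not every class of $V$ admits a lift with residue in $k$, so $\mathcal{G}$ must be stratified and restricted (at a harmless constant-factor cost) or the multiplier must be chosen adaptively in $k + kX^{f-1}$ for each pair $(W_0, W')$; the exceptional low-dimensional pairs with $e(f-1) \leq 2$, notably $(e,f) = (2,2)$, are handled by an alternative family of two-dimensional subspaces of $B/k$, which likewise has enough members and is irredundant through the same use of Proposition \ref{prop:JstarI}.
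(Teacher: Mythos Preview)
The verification step has the inclusion backwards. You invoke Proposition~\ref{prop:JstarI} to conclude $(I_{W'}:X^{f-1})\subseteq I_{W_0}^{\princ_{I_{W'}}}$, but for $J\neq k$ one has $J^{\princ_I}=(I:(I:J))=\bigcap\{(I:b)\mid bJ\subseteq I\}$: the $\bigcup$ printed in Proposition~\ref{prop:JstarI} is a typo, as the paper's own use of the formula (with $\bigcap$) in the proof of this very proposition confirms, and as a direct computation of $(I:(I:J))$ shows. Exhibiting a single multiplier $b$ with $bI_{W_0}\subseteq I_{W'}$ therefore gives only the \emph{upper} bound $I_{W_0}^{\princ_{I_{W'}}}\subseteq(I_{W'}:b)$, which cannot establish that the closure strictly exceeds $I_{W_0}$; what is needed is a witness lying in $(I_{W'}:b)$ for \emph{every} admissible $b$. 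Your construction also makes this harder than necessary, since each $I_W$ already contains $X^{f-1}$, so the natural witness is unavailable. Two further gaps: the ``harmless constant-factor cost'' of restricting to lifts with residue in $k$ is in fact a factor of $q^{e-1}$, which for $e\ge 2$ degrades the exponent to $n-e-2$; and deferring $f=1$ to Proposition~\ref{prop:gamman-PVD} yields only the limit, not the inequality for every $q$ that the statement requires.

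The paper's argument avoids all of this by taking two-dimensional subspaces $I(\alpha)=\langle 1,\alpha\rangle$ chosen \emph{not} to contain $X^{f-1}$, and then analysing \emph{all} multipliers $\gamma$ with $\gamma I(\alpha)\subseteq I(\beta)$. A unit $\gamma$ forces $I(\alpha)=\gamma^{-1}I(\beta)$ by a dimension count, so the two are unit-equivalent; any non-unit $\gamma$ annihilates $X^{f-1}$, so $X^{f-1}\in(I(\beta):\gamma)$ automatically. Hence for inequivalent pairs every admissible $\gamma$ is a non-unit and $X^{f-1}\in I(\alpha)^{\princ_{I(\beta)}}\setminus I(\alpha)$, which is exactly the hypothesis of Lemma~\ref{lemma:exponential}. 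Counting unit-equivalence classes then gives the $q^{n-3}$ bound uniformly in $e$ and $f$ (including $f=1$, where $B$ is a field and inequivalent $I(\alpha),I(\beta)$ satisfy $I(\alpha)^{\princ_{I(\beta)}}=B$).
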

\begin{proof}
If $n\leq 3$, then $\theta((e,f),q)\geq 0\geq n-3$ and we are done. Suppose $n\geq 4$. By definition, $\theta((e,f),q)$ is equal to the number of star operations on $(k,B)$, where $B:=L[X]/(X^f)$ for some extension $L$ of $k$ and some $f\geq 1$. 

For every $\alpha\in B\setminus k$, consider the submodule $I(\alpha):=\langle 1,\alpha\rangle$. Every such submodule can be obtained in $q^2-q$ ways (through all the $\beta\in I(\alpha)\setminus k$) and thus there are exactly $(q^n-q)/(q^2-q)=(q^{n-1}-1)/(q-1)$ different submodules. Let $\mathcal{I}$ be the set of the ideals $I(\alpha)$ not containing $X^{f-1}$.

We want to consider $I(\alpha)^{\princ_{I(\beta)}}$. By Proposition \ref{prop:JstarI}, we have
\begin{equation*}
I(\alpha)^{\princ_{I(\beta)}}=\bigcap\{(I(\beta):\gamma)\mid I(\alpha)\subseteq(I(\beta):\gamma)\}.
\end{equation*}
Suppose $I(\alpha)\subseteq(I(\beta):\gamma)$: then, if $\gamma$ is a unit of $B$ we have $I(\alpha)\subseteq\gamma^{-1}I(\beta)$, and the two ideals must be equal since they are both $2$-dimensional $k$-subspaces of $B$. On the other hand, if $\gamma$ is not a unit then $\gamma X^{f-1}=0$ and thus $X^{f-1}\in(I(\beta):\gamma)$. Therefore, we have two cases:
\begin{itemize}
\item if $I(\alpha)\subseteq(I(\beta):\gamma)$ for some unit $\gamma$, then $I(\alpha)^{\princ_{I(\beta)}}=I(\alpha)=\gamma^{-1}I(\beta)$ and $I(\alpha)$ and $I(\beta)$ are in the same class;
\item if $I(\alpha)\nsubseteq(I(\beta):\gamma)$ for all units $\gamma$, then $X^{f-1}\in I(\alpha)^{\princ_{I(\beta)}}$.
\end{itemize}
Note that, if $f=1$, then $B$ is a field and thus in the second case $I(\alpha)\nsubseteq(I(\beta):\gamma)$ for all $\gamma\neq 0$, and thus $I(\alpha)^{\princ_{I(\beta)}}=B$.

Let $\mathcal{G}$ be a subset of $\mathcal{I}$ containing exactly one $I(\alpha)$ for each class with respect to $\sim$. If $I\in\mathcal{G}$, then $X^{f-1}\in I^{\princ_J}$ for every $J\in\mathcal{G}\setminus\{I\}$; thus, $X^{f-1}\in I^{\princ_{\mathcal{G}\setminus\{J\}}}$. Since $X^{f-1}\notin I$ by definition, by Lemma \ref{lemma:exponential} we have $|\insstar(k,B)|\geq 2^{|\mathcal{G}|}$.

We now need to estimate $|\mathcal{G}|$. By the reasoning above, the ideals $I(\alpha)$ and $I(\beta)$ can be in the same class only if $I(\beta)=\gamma I(\alpha)$ for some unit $\gamma$. Since $1\in I(\alpha)$, the ideal $I(\beta)$ can be in this form only if $\gamma^{-1}\in I(\beta)$, and thus there are at most $q^2-1$ possibilities; furthermore, $\gamma I(\alpha)=t\gamma I(\alpha)$ for all $t\in k\setminus\{0\}$, and thus the number of such ideals is at most $(q^2-1)/(q-1)=q+1$. Hence,
\begin{equation*}
|\mathcal{G}|\geq \inv{q+1}\left(\frac{q^{n-1}-1}{q-1}-1\right)=\frac{q^{n-1}-q}{q^2-1}\geq q^{n-3}.
\end{equation*}
The claim follows.
\end{proof}

The previous proof does not quite work in the non-local case, since in this case $B$ does not have an essential $B$-ideal analogous to $X^f$. A first attempt is the following.
\begin{prop}\label{prop:prodotto}
Let $B_1,B_2$ two $k$-algebras. There is an injective, order-preserving map 
\begin{equation*}
\insstar(k,B_1)\times\insstar(k,B_2)\hookrightarrow\insstar(k,B_1\times B_2).
\end{equation*}
In particular, $|\insstar(k,B_1)|\leq|\insstar(k,B_1\times B_2)|$.
\end{prop}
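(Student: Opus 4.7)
The plan is to construct the map explicitly. Given $(\star_1,\star_2)$, I would define an operation $\star$ on $\mathcal{F}_0(k,B_1\times B_2)$ by
\[
I^\star := \begin{cases} I & \text{if } \dim_k I = 1,\\ \pi_1(I)^{\star_1}\times\pi_2(I)^{\star_2} & \text{if } \dim_k I\geq 2,\end{cases}
\]
where $\pi_i\colon B_1\times B_2\to B_i$ are the canonical projections; the formula makes sense because $I\in\mathcal{F}_0(k,B_1\times B_2)$ entails $\pi_i(I)\in\mathcal{F}_0(k,B_i)$. The dimension-one exception cannot be avoided: an element $I\in\mathcal{F}_0$ is one-dimensional if and only if $I=uk$ for a unit $u$ of $B_1\times B_2$, and the naive product formula would send the diagonal copy of $k$ to $k\times k$, so in particular it would violate $k^\star=k$.

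I would then verify the closure axioms and the multiplicative condition. Extension is immediate from $I\subseteq\pi_1(I)\times\pi_2(I)\subseteq I^\star$; idempotency reduces to the identity $\pi_i(\pi_1(I)^{\star_1}\times\pi_2(I)^{\star_2})=\pi_i(I)^{\star_i}$; order preservation requires one extra check in the mixed case $\dim I=1\leq\dim J-1$, which follows because the generator $u$ of $I=uk$ lies in $J$ and hence in $\pi_1(J)^{\star_1}\times\pi_2(J)^{\star_2}$. The identity $k^\star=k$ is built in. For multiplicativity, the key step is the inclusion $\pi_i((I:b))\subseteq(\pi_i(I):b_i)$, which is immediate from the definition of colon and which, combined with the multiplicativity of each $\star_i$, handles the generic case $\dim I\geq 2$, $\dim(I:b)\geq 2$. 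The degenerate cases are easier: when $(I:b)=vk$ is one-dimensional, the required inclusion reduces to $vb\in I^\star$ and follows from $vb\in I$; when $I=uk$, one checks that $(I:b)\in\mathcal{F}_0$ forces $b$ to be a unit of $B_1\times B_2$, and the formula then reduces to a trivial identity.

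Injectivity and monotonicity of $(\star_1,\star_2)\mapsto\star$ follow by testing on ideals of the form $J\times B_2$ (and symmetrically $B_1\times J$): such an ideal is always at least two-dimensional and satisfies $(J\times B_2)^\star=J^{\star_1}\times B_2$, which distinguishes different $\star_1$'s and transports the componentwise order. The main obstacle I anticipate is precisely the boundary bookkeeping for $\dim I=1$: one has to ensure the exception in the definition is coherent with closure, idempotency and multiplicativity simultaneously, which is what essentially forces the prescription $I^\star=I$ on one-dimensional ideals rather than any alternative.
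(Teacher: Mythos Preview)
Your construction is correct and gives a valid proof of the proposition; the one small inaccuracy is in the case $\dim_k I=1$: the condition $(I:b)\in\mathcal{F}_0$ does not quite force $b$ to be a unit, since $b=0$ gives $(I:b)=B$, but that extra case is trivial.

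Your route is genuinely different from the paper's. The paper does not write down an explicit formula for the image operation; instead it defines $\iota(\star_1,\star_2)$ as the star operation \emph{generated} by the family $\mathcal{H}_1\cup\mathcal{H}_2$, where $\mathcal{H}_1=\{I\times B_2:I=I^{\star_1}\}$ and symmetrically for $\mathcal{H}_2$. Because ``generated'' automatically produces a star operation, the paper skips all the closure and multiplicativity verifications you carry out, but then it must compute $(I\times B_2)^{\iota(\star_1,\star_2)}$ via the colon formula for principal operations in order to prove injectivity. Your approach inverts the trade-off: you pay up front by checking the axioms (including the delicate dimension-one boundary case), and in return the test on $J\times B_2$ becomes a one-line computation. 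Note also that the two constructions need not produce the same star operation on $B_1\times B_2$: your $\star$ closes every product $J_1\times J_2$ with $J_i$ $\star_i$-closed, hence $\star\leq\iota(\star_1,\star_2)$, but both choices serve equally well for the statement.
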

\begin{proof}
Let $\mathcal{G}_i:=\insfracid_0(k,B_i)$. For each $\star_1\in\insstar(k,B_1)$, let $\mathcal{H}_1:=\{I\times B_2\mid I\in\mathcal{G}_1^{\star_1}\}$ and define symmetrically $\mathcal{H}_2$. For every pair $(\star_1,\star_2)$ in the product $\insstar(k,B_1)\times\insstar(k,B_2)$, let $\iota(\star_1,\star_2)$ be the star operation generated by $\mathcal{H}_1\cup\mathcal{H}_2$: then, $\iota$ is a map from the product to $\insstar(k,B_1\times B_2)$. To show that it is injective, let $J:=I\times B_2$ for some $I\in\mathcal{G}_1$. Then, for every $L=L_1\times B_2\in\mathcal{H}_1$, we have
\begin{equation*}
(J:L)=(I\times B_2:L_1\times B_2)=(I:L_1)\times B_2
\end{equation*}
and thus $J^{\princ_L}=I^{\princ_{L_1}}\times B_2$; On the other hand, if $L=B_1\times L_2\in\mathcal{H}_2$, then $J^{\princ_L}=B_1\times B_2$. Therefore, $J^{\iota(\star_1,\star_2)}=I^{\star_1}\times B_2$; in the same way, if $J=B_1\times I$, then $J^{\iota(\star_1,\star_2)}=B_1\times I^{\star_2}$. Hence, $\iota$ is injective and the claim is proved.
\end{proof}

The previous proposition works best when $B$ has a local factor $B_i$ such that $n_i=e_if_i$ is large; in that case, using Proposition \ref{prop:down-numstar-loc} have $|\insstar(k,B)|\geq|\insstar(k,B_i)|\geq 2^{q^{n_i}-3}$, which gives $\theta(\mathbf{s},q)\geq n_i-3$. However, if every $n_i$ is small with respect to $n$ we need a different kind of estimate.

\begin{lemma}\label{lemma:numunits}
Let $B$ be a finite $k$-algebra, and let $n:=\ell_k(B)$. Then, for every $\epsilon>0$ there is a $q(\epsilon)$ such that if $q\geq q(\epsilon)$ the ring $B_\mathbf{s}$ has at least
\begin{equation*}
(q-1)^n\geq q^{(1-\epsilon)n}
\end{equation*}
units.
\end{lemma}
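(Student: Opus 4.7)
The plan is to compute the number of units in $B$ exactly, using the decomposition into local factors, and then bound this count from below by $(q-1)^n$ via a simple telescoping-style inequality.

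First I would write $B \simeq B_1 \times \cdots \times B_t$, where $B_i \simeq \mathbb{F}_{q^{e_i}}[X]/(X^{f_i})$ as in Section \ref{sect:Artin}, so that $n = \sum_i e_i f_i$. A tuple $(b_1,\ldots,b_t)$ is a unit of $B$ precisely when each $b_i$ is a unit of $B_i$; and an element of $B_i$ is a unit exactly when its constant term (in the presentation above) is nonzero, since the maximal ideal of $B_i$ is generated by $X$. This gives $|B_i^\times| = (q^{e_i}-1)\cdot q^{e_i(f_i-1)}$, and hence
\begin{equation*}
|B^\times| = \prod_{i=1}^t (q^{e_i}-1)\,q^{e_i(f_i-1)}.
\end{equation*}

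Next I would use the elementary inequality $q^m - 1 \geq (q-1)^m$ for every positive integer $m$, which follows from $q^m-1 = (q-1)(q^{m-1}+\cdots+1) \geq (q-1)\cdot q^{m-1} \geq (q-1)^m$; combined with the trivial $q^m \geq (q-1)^m$, this yields
\begin{equation*}
|B^\times| \geq \prod_{i=1}^t (q-1)^{e_i}\,(q-1)^{e_i(f_i-1)} = (q-1)^{\sum_i e_if_i} = (q-1)^n,
\end{equation*}
which is the first of the two claimed inequalities.

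Finally, for the asymptotic part, I would write $(q-1)^n = q^n\bigl(1-\tfrac{1}{q}\bigr)^n$ and compare exponents of $q$: the inequality $(q-1)^n \geq q^{(1-\epsilon)n}$ is equivalent to $\bigl(1-\tfrac{1}{q}\bigr)^n \geq q^{-\epsilon n}$, i.e. to $-\log\!\bigl(1-\tfrac{1}{q}\bigr) \leq \epsilon\log q$. Since the left-hand side is $\sim 1/q$ as $q\to\infty$ while the right-hand side tends to $\infty$, there is a threshold $q(\epsilon)$ beyond which the inequality holds. No step here is genuinely hard; the only point worth care is the inequality $q^m-1\geq (q-1)^m$, which is what makes the bound come out cleanly as $(q-1)^n$ with exponent exactly $n$ rather than a weaker power.
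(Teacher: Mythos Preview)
Your proof is correct and follows essentially the same route as the paper: decompose $B$ into its local factors $B_i\simeq\mathbb{F}_{q^{e_i}}[X]/(X^{f_i})$, count units in each factor, bound the product below by $(q-1)^n$, and then handle the asymptotic inequality $(q-1)^n\geq q^{(1-\epsilon)n}$ for large $q$. The only cosmetic difference is that the paper first bounds $|B_i^\times|\geq q^{n_i-1}(q-1)$ and then relaxes the product $q^{n-t}(q-1)^t$ to $(q-1)^n$, whereas you go straight to $(q-1)^{n_i}$ per factor via the inequality $q^m-1\geq(q-1)^m$; your route is marginally more direct but the argument is the same in substance.
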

\begin{proof}
Let $B:=B_1\times\cdots\times B_t$, where $B_i:=\ins{F}_{q^{e_i}}[X]/(X^{f_i})$ for each $i$. Set $n_i:=e_if_i$. Each $B_i$ has $q^{e_if_i}-q^{e_i}\geq q^{n_i}-q^{n_i-1}$ units; thus the number of units of $B$ is at least
\begin{equation*}
\begin{aligned}
\prod_{i=1}^{t}(q^{n_i}-q^{n_i-1}) & \geq\prod_{i=1}^{t}q^{n_i-1}(q-1)=q^{n-(t-1)}(q-1)^{t}\geq\\
& \geq(q-1)^{n}=q^{n\log_q(q-1)}\geq q^{(1-\epsilon)n}
\end{aligned}
\end{equation*}
for large $q$. The claim is proved.
\end{proof}

\begin{prop}\label{prop:down-numstar-k}
Let $\mathbf{s}:=\{(e_1,f_1),\ldots,(e_t,f_t)\}$ be a sequence of pairs of positive integers, and let $n:=n(\mathbf{s})$ and $r:=e_tf_t$. If $n-r\geq 3$, then
\begin{equation*}
\liminf_{q\to\infty}\theta(\mathbf{s},q)\geq n-r-2.
\end{equation*}
\end{prop}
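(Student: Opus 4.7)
The plan is to imitate the construction of Proposition \ref{prop:down-numstar-loc} while exploiting the product decomposition $B = C \times B_t$, where $C := B_1 \times \cdots \times B_{t-1}$ has length $m := n - r \geq 3$ over $k$. For each unit $\alpha \in C$ with $\alpha \notin k$, I would consider the $2$-dimensional ideal
\begin{equation*}
I(\alpha) := k \cdot 1_B + k \cdot (\alpha, 0) \subseteq B,
\end{equation*}
and set $v := (0_C, 1_{B_t})$. The element $v$ plays the role of the socle element $X^{f-1}$ in the local proof; substituting coordinates, one sees $v \notin I(\alpha)$ precisely because $\alpha$ is a unit of $C$ outside $k$.

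The technical heart is to show that, whenever $I(\alpha)$ and $I(\beta)$ lie in distinct orbits of the natural $B^*$-action on this family, one has $v \in I(\alpha)^{\princ_{I(\beta)}}$. Following the case split of Proposition \ref{prop:down-numstar-loc}, any qualifying $\gamma \in B$ (i.e.\ with $\gamma \cdot I(\alpha) \subseteq I(\beta)$) must lie in $I(\beta)$, so it writes uniquely as $\gamma = (a + b\beta,\, a \cdot 1_{B_t})$ with $a, b \in k$. A unit $\gamma$ forces $\gamma^{-1}I(\beta) = I(\alpha)$ by dimension, placing them in the same orbit (excluded). A non-unit $\gamma$ with $a = 0$ reduces, using that $\beta$ is a unit of $C$ and $\alpha \notin k$, to $\gamma = 0$. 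A non-unit $\gamma$ with $a \neq 0$ produces the $k$-linear dependence $\alpha\beta = s\alpha + t\beta$ in $C$ for some $s, t \in k$, equivalently $\beta = s\alpha/(\alpha - t)$.

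The decisive observation is that these dependent $\beta$'s are exactly the non-trivial orbit elements of $I(\alpha)$ in our family. Computing when $u \cdot I(\alpha) = I(\beta')$ for a unit $u \in B^*$ gives $\beta' = c\alpha/(x + y\alpha)$ parametrized by $c \in k^*$ and $(x,y) \in k^2 \setminus \{(0,0)\}$ modulo the $k^*$-action, yielding an orbit of size $q + 1$; cross-multiplying shows that its $q$ non-trivial elements coincide with the $\beta = s\alpha/(\alpha - t)$ above, while the remaining element is $I(\alpha)$ itself. Hence for distinct orbit representatives the only qualifying $\gamma$ is $0$, so $I(\alpha)^{\princ_{I(\beta)}} = (I(\beta):0) = B \ni v$.

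Taking $\mathcal{G}$ to be one $I(\alpha)$ per orbit, Lemma \ref{lemma:exponential} applies. Using Lemma \ref{lemma:numunits} to bound the number of units in $C$ by at least $(q-1)^m$, subtracting the $q-1$ units inside $k$ and dividing by both $q-1$ (for the redundancy $\alpha \sim c\alpha$ giving the same $I(\alpha)$) and by $q+1$ (the orbit size), one obtains
\begin{equation*}
|\mathcal{G}| \geq \frac{(q-1)^{m-1}-1}{q+1} \geq q^{m-2+o(1)},
\end{equation*}
so $\theta(\mathbf{s},q) \geq \log_q|\mathcal{G}| \geq m - 2 + o(1) = n - r - 2 + o(1)$, and hence $\liminf_{q\to\infty}\theta(\mathbf{s},q) \geq n - r - 2$. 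The main obstacle is the clean identification between the bad-pair condition (linear dependence of $\alpha, \beta, \alpha\beta$) and unit-orbit equivalence, since that is exactly what saves the factor of $q$ that a naive application of Proposition \ref{prop:down-numstar-loc} to $C$ would have cost.
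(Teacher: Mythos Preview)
Your proof is correct and follows essentially the same construction as the paper's: the same family $I(\alpha)=\langle 1,(\alpha,0)\rangle$ for units $\alpha\in B'\setminus k$, the same reduction to showing $I(\alpha)^{\princ_{I(\beta)}}=B$ for inequivalent $I(\alpha),I(\beta)$, and the same application of Lemma~\ref{lemma:exponential} with the same counting. The one difference is that the paper dispatches your third case (non-unit $\gamma$ with $a\neq 0$) more directly: from $(a+b\beta)\alpha=c\beta$ with $c\neq 0$ one reads off $a+b\beta=c\beta\alpha^{-1}$, which is a unit of $B'$, so $\gamma$ is a unit after all and the case is vacuous---your orbit-identification argument is valid but unnecessary, and consequently the auxiliary element $v=(0,1_{B_t})$ is never actually used since you already obtain $I(\alpha)^{\princ_{I(\beta)}}=B$.
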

\begin{proof}
Since we are only interested in the limit for large $q$, we can suppose that $q>t$, so that we only need to consider subspaces in $\mathcal{F}_1(k,B)$.

Let $B_i:=\ins{F}_{q^{e_i}}[X]/(X^{f_i})$, write $B:=B_1\times\cdots\times B_t$ and $B':=B_1\times\cdots\times B_{t-1}$, so that $B=B'\times B_t$. By Lemma \ref{lemma:numunits}, for large $q$ the ring $B'$ has at least $q^{(1-\epsilon)(n-r)}$ units. For every unit $\alpha$ of $B'$ not belonging to $k$, let $I(\alpha):=\langle 1,(\alpha,0)\rangle$, and let $\mathcal{I}$ be the set of such $I(\alpha)$.

We note that the unique elements of $I(\alpha)$ in the form $(\gamma,0)$  are those with $\gamma=z\alpha$, for some $z\in k$: hence, the cardinality of $\mathcal{I}$ is equal to 
\begin{equation*}
\frac{|U(B')|}{q-1}\geq \frac{q^{(1-\epsilon)(n-r)}-1}{q-1}\geq q^{(1-\epsilon)(n-r)-1}
\end{equation*}
for large $q$. Furthermore, the second component of any element of $I(\alpha)$ belongs to $k$.

By Proposition \ref{prop:JstarI}, we have
\begin{equation*}
I(\alpha)^{\princ_{I(\beta)}}=\bigcap\{(I(\beta):\gamma)\mid \gamma I(\alpha)\subseteq I(\beta)\}.
\end{equation*}
We claim that if $\gamma\neq 0$ and $\gamma I(\alpha)\subseteq I(\beta)$ then $\gamma$ is a unit of $B$.

Indeed, let $\gamma:=(\gamma_1,\gamma_2)$. Since $\gamma I(\alpha)\subseteq I(\beta)$, we must have $\gamma\cdot 1\in I(\beta)$ and $\gamma(\alpha,0)\in I(\beta)$. The second component of $\gamma=\gamma\cdot 1$ is $\gamma_2$, and thus must belong to $k$. If $\gamma_2=0$, then $\gamma(\alpha,0)=(\gamma_1\alpha,0)$ and thus $\gamma_1\alpha=s\beta$ for some $s\in k$; in particular, $I(\alpha)=I(\beta)$.

If $\gamma_2\neq 0$ and $\gamma_1=0$, then $(0,\gamma_2)\in I(\beta)$; however, this would imply that $(1,0)=(1,1)-\gamma_2^{-1}\gamma\in I(\beta)$, against $\alpha\neq k$.

Suppose $\gamma_1,\gamma_2\neq 0$. Then, $\gamma_1\alpha=s\beta$ for some $s\in k$; since $\alpha$ is a unit, $s\beta\neq 0$ and thus $s\beta$ is a unit of $B'$, and thus the same must hold for $\gamma_1\alpha$; in particular, $\gamma_1$ must be a unit of $B'$, and thus $\gamma=(\gamma_1,\gamma_2)$ is a unit of $B$, as claimed.

Therefore, if $\gamma\neq 0$ and $\gamma I(\alpha)\subseteq I(\beta)$ then $(I(\beta):\gamma)=\gamma^{-1}I(\beta)$ has dimension $2$, and thus must be equal to $I(\alpha)$. Hence, $I(\alpha)^{\princ_{I(\beta)}}$ is either equal to $I(\alpha)$ or to $B$, and in the former case it must be in the form $\gamma^{-1}I(\beta)$ for some unit $\gamma$ of $B$ belonging to $I(\beta)$; in particular, $I(\alpha)$ and $I(\beta)$ must be in the same class. Let $\mathcal{G}$ be a set constructed by taking one representative for each class. There are at most $q^2-q$ possible units $\gamma$ (for each choice of $\alpha$), and they give $q$ different ideals $\gamma^{-1}I(\beta)$, since $\gamma^{-1}I(\beta)=(s\gamma)^{-1}I(\beta)$ for every $s\in k$. Hence,
\begin{equation*}
|\mathcal{G}|\geq\frac{q^{(1-\epsilon)(n-r)-1}}{q}=q^{(1-\epsilon)(n-r)-2}
\end{equation*}
for large $q$.

As in the proof of Proposition \ref{prop:down-numstar-loc}, $\mathcal{G}$ satisfies the hypothesis of Lemma \ref{lemma:exponential}, and thus $|\insstar(k,B)|\geq 2^{|\mathcal{G}|}$. Using the previous estimate and taking the limit as $q\to\infty$ we have our claim.
\end{proof}

\begin{prop}\label{prop:liminf}
Let $\mathbf{s}$ be a sequence of pair of positive integers, and let $n:=n(\mathbf{s})$. Then,
\begin{equation*}
\liminf_{q\to\infty}\theta(\mathbf{s},q)\geq\begin{cases}
\frac{n-4}{2} & \text{if~}n\text{~is even},\\
\frac{n-3}{2} & \text{if~}n\text{~is odd}.
\end{cases}
\end{equation*}
\end{prop}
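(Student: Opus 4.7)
My plan is to combine the two lower-bound tools already developed in this section. On the one hand, Proposition~\ref{prop:prodotto} iterated together with Proposition~\ref{prop:down-numstar-loc} yields
\begin{equation*}
\theta(\mathbf{s},q)\geq\max_i n_i-3,\qquad n_i:=e_if_i;
\end{equation*}
on the other hand, after relabelling so that the smallest $n_i$ comes last, Proposition~\ref{prop:down-numstar-k} gives
\begin{equation*}
\liminf_{q\to\infty}\theta(\mathbf{s},q)\geq n-\min_i n_i-2,
\end{equation*}
provided that the side condition $n-\min_i n_i\geq 3$ holds. The strategy is to split into cases based on the size of $n_{\max}:=\max_i n_i$, choosing the threshold so that in each regime one of the two estimates already matches the claimed bound.

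For $n$ odd with $n\geq 5$ I would argue as follows. If some $n_i\geq(n+3)/2$, the first estimate immediately gives $\theta(\mathbf{s},q)\geq(n-3)/2$. Otherwise every $n_i\leq(n+1)/2<n$, so $t\geq 2$; the parts other than the maximum then sum to $n-n_{\max}\geq(n-1)/2$, and a short check (separating the subcases $n_{\max}\leq(n-1)/2$ and $n_{\max}=(n+1)/2$) shows $\min_i n_i\leq(n-1)/2$. Consequently $n-\min_i n_i\geq(n+1)/2\geq 3$, Proposition~\ref{prop:down-numstar-k} applies, and one obtains $\liminf_{q\to\infty}\theta(\mathbf{s},q)\geq(n-3)/2$. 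The case $n$ even with $n\geq 6$ is entirely analogous, splitting at the threshold $n_i\gtrless n/2+1$ and concluding $\liminf\theta\geq(n-4)/2$.

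The small cases $n\leq 4$ need separate attention. For $n\leq 3$ there is nothing to check, by Proposition~\ref{prop:ns3}. For $n=4$ the claimed bound reduces to $\liminf\theta(\mathbf{s},q)\geq 0$, i.e., the existence of at least two star operations for large $q$; whenever some $n_i\geq 3$ the first estimate already delivers this, and the only residual situation, where $\mathbf{s}$ consists of two pairs with $e_if_i=2$, is covered by the explicit classification in Theorem~\ref{teor:n4}.

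The main (and essentially only) obstacle is the combinatorial bookkeeping: one must pick the case threshold on $n_{\max}$ so that both (i) the complementary bound $n-\min_i n_i-2$ still reaches the target quadratic, and (ii) the side condition $n-\min_i n_i\geq 3$ of Proposition~\ref{prop:down-numstar-k} is automatically fulfilled. This pair of requirements is precisely what forces the separate treatment of the smallest values of $n$; no new ideas beyond Propositions~\ref{prop:down-numstar-loc}, \ref{prop:prodotto}, and~\ref{prop:down-numstar-k} are required.
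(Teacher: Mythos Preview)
Your approach is essentially the paper's: use Propositions~\ref{prop:down-numstar-loc} and~\ref{prop:prodotto} when some $n_i$ is large, and Proposition~\ref{prop:down-numstar-k} (after relabelling) when all $n_i$ are small, with the threshold placed near $n/2$. The paper organizes the case split a little differently (for odd $n$ it splits on $t=1$ versus $t>1$, and in the even case it places the \emph{largest} $n_i$ last rather than the smallest), but the substance is identical.

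One minor bookkeeping slip: in your $n=4$ discussion, the condition ``no $n_i\geq 3$'' leaves not only the partition $2+2$ but also $2+1+1$ and $1+1+1+1$. These are harmless---$\min_i n_i=1$ there, so your second estimate (with $n-\min_i n_i=3$) already gives $\liminf\theta\geq 1$---but you should say so rather than claim $2+2$ is the only residual case. Also, invoking Theorem~\ref{teor:n4} is a forward reference; it is not circular, but the paper simply remarks after Theorem~\ref{teor:liminfsup} that the bound is only significant for $n\geq 5$ and defers $n=4$ to Section~\ref{sect:n4}.
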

\begin{proof}
Let $\mathbf{s}:=\{(e_1,f_1),\ldots,(e_t,f_t)\}$, and let $n_i:=e_if_i$ for each $i$. 

Suppose first that $n$ is even, and let $r$ be the largest of the $n_i$. If $r\leq n/2$, then by Proposition \ref{prop:down-numstar-k} we have, for large $q$,
\begin{equation*}
\liminf_{q\to\infty}\theta(\mathbf{s},q)\geq n-\frac{n}{2}-2=\frac{n}{2}-2=\frac{n-4}{2}.
\end{equation*}
On the other hand, if $r>n/2$ then $r\geq (n+2)/2$, and thus by Propositions \ref{prop:down-numstar-loc} and \ref{prop:prodotto} we have
\begin{equation*}
\theta(\mathbf{s},q)\geq r-3\geq\frac{n}{2}-2=\frac{n-4}{2},
\end{equation*}
and the claim follows.

Suppose now that $n$ is odd. If $t=1$ (i.e., if the associated Artinian ring $B$ is local) then by Proposition \ref{prop:down-numstar-loc} we have $\theta(\mathbf{s},q)\geq n-3\geq\frac{n-3}{2}$. If $t>1$, then there is at least one $n_i$ that is smaller than $n/2$; let it be $r$. Then, $r\leq(n-1)/2$ and, by Proposition \ref{prop:down-numstar-k}, we have
\begin{equation*}
\liminf_{q\to\infty}\theta(\mathbf{s},q)\geq n-\frac{n-1}{2}-2=\frac{n-3}{2}.
\end{equation*}
The claim is proved.
\end{proof}

\begin{teor}\label{teor:liminfsup}
Let $\mathbf{s}$ be a sequence of pair of positive integers, and let $n:=n(\mathbf{s})$. Then,
\begin{equation*}
\begin{dcases}
\frac{n-4}{2}\leq \liminf_{q\to\infty}\theta(\mathbf{s},q)\leq\limsup_{q\to\infty}\theta(\mathbf{s},q)\leq\frac{n(n-2)}{4} & \text{if~}n\text{~is even},\\
\frac{n-3}{2}\leq \liminf_{q\to\infty}\theta(\mathbf{s},q)\leq\limsup_{q\to\infty}\theta(\mathbf{s},q)\leq\frac{(n-1)^2}{4} & \text{if~}n\text{~is odd}.
\end{dcases}
\end{equation*}
\end{teor}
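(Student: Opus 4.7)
The statement is a direct synthesis of two results already established in the preceding two sections, so my plan is essentially to assemble them. The middle inequality $\liminf_{q\to\infty}\theta(\mathbf{s},q)\leq\limsup_{q\to\infty}\theta(\mathbf{s},q)$ holds for any real-valued function of $q$, so it requires no argument at all.

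For the upper bounds, I would simply invoke Theorem \ref{teor:up-numstar}, which already gives the value $\frac{n(n-2)}{4}$ when $n$ is even and $\frac{(n-1)^2}{4}$ when $n$ is odd as upper bounds for $\limsup_{q\to\infty}\theta(\mathbf{s},q)$. For the lower bounds, I would invoke Proposition \ref{prop:liminf}, which supplies the bounds $\frac{n-4}{2}$ (even $n$) and $\frac{n-3}{2}$ (odd $n$) for $\liminf_{q\to\infty}\theta(\mathbf{s},q)$. Assembling the three inequalities in each parity case yields exactly the chain stated in the theorem.

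There is no real obstacle here, since the substance of the argument lives in the preparatory results: Lemma \ref{lemma:numvsp} and the reduction to subspaces containing $1$ (via Proposition \ref{prop:insiemiF}) for the upper bound, and the principal-star-operation machinery (Lemma \ref{lemma:exponential}, Propositions \ref{prop:down-numstar-loc}, \ref{prop:prodotto}, \ref{prop:down-numstar-k}) for the lower bound, together with the case split on whether the largest local factor dominates $n/2$. At this stage I would just write a three-line proof noting that the upper bound is Theorem \ref{teor:up-numstar}, the lower bound is Proposition \ref{prop:liminf}, and the middle inequality is automatic.
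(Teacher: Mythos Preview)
Your proposal is correct and matches the paper's own proof exactly: the paper simply cites Proposition \ref{prop:liminf} for the lower bounds and Theorem \ref{teor:up-numstar} for the upper bounds, with the middle inequality being automatic.
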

\begin{proof}
The first inequality (of both cases) follows from Proposition \ref{prop:liminf}, and last from Theorem \ref{teor:up-numstar}.
\end{proof}

Note that the limit inferior of the previous theorem is only significant for $n\geq 5$. For $n\leq 3$ we can apply Proposition \ref{prop:ns3}, while we will analyze the case $n=4$ in Section \ref{sect:n4}.

\section{Codimension 1}\label{sect:codim1}
In this section, we consider the subspaces of $B$ having codimension $1$; in particular, we want to show that the number of classes of these ideals (with respect to generating the same star operation) can be bounded above  by a function that does \emph{not} depend on the size of $k$.

The main tool is the use of canonical ideals. Given an integral domain $D$ with quotient field $K$, an ideal $I$ of $D$ is a \emph{canonical ideal} of $D$ if, for every ideal $J$, we have $J=(I:_K(I:_KJ))$; note that there are many equivalent (and more general) definitions (see e.g. \cite[Chapter 15]{matlis-1dimCM}), but we use this one since it is closer to our subject. In particular, we need the following characterization.
\begin{prop}\label{prop:canonico}
Let $(D,\mathfrak{m})$ be a one-dimensional Noetherian local domain, let $T$ be its integral closure, and suppose that $(D:T)=\mathfrak{m}$. Let $I\in\insfracid_0(D,T)$. Then, $I$ is a canonical ideal if and only if $\ell_D(T/I)=1$ and $\mathfrak{m}$ is the largest ideal of $T$ contained in $I$.
\end{prop}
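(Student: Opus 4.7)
The plan is to prove both implications by reducing to the associated Artinian ring $B = T/\mathfrak{m}$. After replacing $I$ by $u^{-1}I$ for a unit $u$ of $T$ lying in $I$ (which preserves every condition in the statement), I may assume $D \subseteq I \subseteq T$, so that automatically $\mathfrak{m} \subseteq (I:T) \subseteq I \subseteq T$.

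For the ``only if'' direction I would extract length information from canonical-module duality. Since $T$ is a maximal Cohen--Macaulay $D$-module and $I$ is canonical, $\operatorname{Ext}^1_D(T, I) = 0$; applying $\operatorname{Hom}_D(-, I)$ to $0 \to D \to T \to T/D \to 0$ and using $\operatorname{Hom}_D(T, I) = (I:T)$ and $\operatorname{Hom}_D(T/D, I) = 0$ (torsion vs.\ torsion-free), one obtains a short exact sequence $0 \to (I:T) \to I \to \operatorname{Ext}^1_D(T/D, I) \to 0$. By the length-preserving duality carried by $I$, $\ell_D(\operatorname{Ext}^1_D(T/D, I)) = \ell_D(T/D) = n-1$ where $n := \ell_D(T/\mathfrak{m})$, so $\ell_D(I/(I:T)) = n-1$. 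Combined with $\ell_D(T/\mathfrak{m}) = n$ and the filtration $\mathfrak{m} \subseteq (I:T) \subseteq I \subseteq T$, this forces $\ell_D(T/I) + \ell_D((I:T)/\mathfrak{m}) = 1$. Ruling out $I = T$ (impossible since $(T:T) = T \neq D = (I:I)$) then yields $\ell_D(T/I) = 1$ and $(I:T) = \mathfrak{m}$.

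For the ``if'' direction I would work inside $B$. The hypothesis makes $\bar I := I/\mathfrak{m}$ a codimension-$1$ $k$-subspace containing no nonzero $B$-ideal; writing $\bar I = \ker p$ for a $k$-linear functional $p : B \to k$, the symmetric bilinear form $\langle a, b\rangle := p(ab)$ has radical $\{a \in B : aB \subseteq \bar I\}$, which is exactly the largest $B$-ideal in $\bar I$ and hence zero. Nondegeneracy identifies the orthogonal operation with $\bar J \mapsto (\bar I :_B \bar J)$ and makes it an involution on $k$-subspaces of $B$, yielding $(\bar I :_B (\bar I :_B \bar J)) = \bar J$ for every $\bar J$. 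To lift this to $(I :_K (I :_K J)) = J$ for an arbitrary fractional $D$-ideal $J$, I would use that $T$ is a semilocal PID to scale $J$ into the range $D \subseteq J \subseteq T$, then show that both $(I:J)$ and $(I:(I:J))$ lie between $\mathfrak{m}$ and $T$ --- the upper bound uses $(I:\mathfrak{m}) = T$, which in turn follows from $(\mathfrak{m}:\mathfrak{m}) = T$ and the largest-$T$-submodule hypothesis --- so that colon commutes with reduction mod $\mathfrak{m}$ and the Artinian identity transfers back. The main obstacle I expect is precisely this transfer step: the nondegeneracy argument in $B$ is essentially linear algebra, but turning it into the fractional-ideal identity over $K$ requires the scaling plus containment bookkeeping just described.
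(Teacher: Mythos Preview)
Your proposal is correct, but it takes a substantially different and longer route than the paper. The paper invokes a single criterion from Matlis, namely that $I$ is canonical if and only if $\ell_D((I:\mathfrak{m})/I)=1$, and then argues directly: since $\mathfrak{m}$ is a $T$-ideal contained in $I$ one has $T\subseteq (I:\mathfrak{m})$, so $\ell_D((I:\mathfrak{m})/I)=1$ forces $(I:\mathfrak{m})=T$ and $\ell_D(T/I)=1$, and the condition $(I:\mathfrak{m})=T$ is shown to be equivalent to $\mathfrak{m}$ being the largest $T$-ideal in $I$. Both directions take only a few lines.

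Your ``only if'' direction replaces this by a genuine homological computation (the vanishing of $\operatorname{Ext}^1_D(T,I)$ for $T$ maximal Cohen--Macaulay, plus Matlis duality on finite-length modules), which is heavier machinery but yields the same length identity. Your ``if'' direction is more interesting: the nondegenerate bilinear form $\langle a,b\rangle = p(ab)$ on $B$ is an elegant, self-contained linear-algebra substitute for the black-box citation, and it makes transparent why $(\bar I:_B -)$ is an involution. The transfer back to $K$ works as you outline; one small correction is that scaling only guarantees $\mathfrak{m}\subseteq J\subseteq T$ rather than $D\subseteq J\subseteq T$ (after arranging $JT=T$ one has $\mathfrak{m}=\mathfrak{m}T=\mathfrak{m}(JT)=(\mathfrak{m}T)J=\mathfrak{m}J\subseteq J$), but this weaker containment is all your reduction needs. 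In fact your verification that $(I:\mathfrak{m})=T$ via $(\mathfrak{m}:\mathfrak{m})=T$ is exactly the computation the paper uses, so the two proofs share that fragment.

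In short: the paper's proof is a two-paragraph reduction to a known criterion; yours trades that citation for explicit duality arguments, gaining self-containment at the cost of length.
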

\begin{proof}
By \cite[Theorem 15.5]{matlis-1dimCM}, $I$ is a canonical ideal if and only if $\ell_D((I:\mathfrak{m})/I)=1$.

Since $\mathfrak{m}$ is a $T$-ideal contained in $I$, we have $I\subsetneq T\subseteq(I:\mathfrak{m})$; hence, $\ell_D((I:\mathfrak{m})/I)=1$ if and only if $(I:\mathfrak{m})=T$ and $\ell_D(T/I)=1$. Furthermore, if $L$ is a $T$-ideal contained in $I$, then $(I:\mathfrak{m})\supseteq(L:\mathfrak{m})$, which is greater than $T$ as soon as $\mathfrak{m}\subsetneq L$. Hence, $\mathfrak{m}$ must be the largest ideal of $T$ contained in $I$.

Conversely, if the two conditions hold, we claim that $(I:\mathfrak{m})=T$: otherwise, there would be $t\in(I:\mathfrak{m})\setminus T$, but then $t\mathfrak{m}$ would be a $T$-ideal contained in $I$ but not in $\mathfrak{m}$, against the hypothesis. Therefore $\ell_D((I:\mathfrak{m})/I)=\ell_D(T/I)=1$ is a canonical ideal.
\end{proof}

Suppose now we are in our setting: $k$ is a field and $B$ is a finite $k$-algebra. Following \cite{multiplicative}, we say that $I\in\inssubmod_k(B)$ is a \emph{canonical ideal} of $(k,B)$ if $\princ_I$ is the identity; this definition mirrors the definition of \emph{$m$-canonical ideals} given in \cite{hhp_m-canonical}. By Proposition \ref{prop:JstarI} and the explicit description of the closure generated by $I$ (see \cite[Lemma 4.4]{multiplicative}), for $I\in\mathcal{F}_1(k,B)$ this is equivalent to the condition that $(I:_B(I:_BJ))=J$ for all $J\in\mathcal{F}_1(k,B)\setminus\{k\}$. 
\begin{prop}\label{prop:can-dim0}
Let $(D,\mathfrak{m})$ be a one-dimensional Noetherian local domain, let $T$ be its integral closure and  suppose $(D:T)=\mathfrak{m}$. Suppose $I$ is a fractional ideal of $D$ satisfying $(D:T)\subseteq I\subseteq T$. Then, $I$ is a canonical ideal of $D$ if and only if its image $J$ is a canonical ideal of $(D/\mathfrak{m},T/\mathfrak{m})$.
\end{prop}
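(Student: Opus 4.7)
The plan is to exploit the order isomorphism between $\insstar(D)$ and $\insstar(k,B)$ provided by \cite[Theorem 6.4 and Corollary 6.5]{multiplicative}, which is constructed by restricting each star operation on $D$ to the fractional ideals in the interval $[\mathfrak{m},T]$ and then passing to the quotient modulo $\mathfrak{m}$. Under the induced correspondence between ideals in $[\mathfrak{m},T]$ and $k$-submodules of $B$, our $I$ (which lies in this interval by hypothesis) is sent exactly to $J=I/\mathfrak{m}$, and this lattice bijection preserves arbitrary infima and suprema.

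First I would verify that the star-operation isomorphism sends $\princ_I$ to $\princ_J$. By definition $\princ_I$ is the largest star operation on $D$ that closes $I$, while $\princ_J$ is the largest star operation on $(k,B)$ that closes $J$; since the bijection is an order isomorphism identifying $I$ with $J$, it sends one to the other. Next I would note that the identity is the minimum element on each side (since $X\leq X^\star$ holds for every star operation and every $X$), so the isomorphism matches $\mathrm{id}_D$ with $\mathrm{id}_{(k,B)}$. Combining these two facts yields $\princ_I=\mathrm{id}_D$ if and only if $\princ_J=\mathrm{id}_{(k,B)}$, which is exactly the claimed equivalence between $I$ being canonical for $D$ and $J$ being canonical for $(k,B)$.

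The main point requiring care is the identification of principal star operations under the bijection: one must confirm that ``closes $I$'' on the $D$ side translates to ``closes $J$'' on the $(k,B)$ side, which is where the hypothesis $(D:T)\subseteq I\subseteq T$ plays its role, since it places $I$ in the region where the isomorphism acts by quotienting rather than by some more delicate procedure.

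As an alternative, a direct structural argument is available via Proposition \ref{prop:canonico}: it gives that $I$ is canonical if and only if $\ell_D(T/I)=1$ and $\mathfrak{m}$ is the largest $T$-ideal contained in $I$. Quotienting by $\mathfrak{m}$ translates these two conditions into $\dim_k(B/J)=1$ and $(0)$ being the largest $B$-ideal of $J$, and repeating the argument of Proposition \ref{prop:canonico} in the Artinian setting shows that this combination is in turn equivalent to $\princ_J=\mathrm{id}_{(k,B)}$, i.e., to $J$ being canonical for $(k,B)$.
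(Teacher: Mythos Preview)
Your main argument is correct and is essentially the paper's own proof, just spelled out in more detail: the paper also invokes the bijection of \cite[Theorem 6.4 and Corollary 6.5]{multiplicative}, notes that $\princ_I$ corresponds to $\princ_J$, and then says ``the claim follows from the definitions''; your observation that the identity is the minimum on each side makes explicit the step the paper leaves implicit. The alternative route via Proposition~\ref{prop:canonico} that you sketch is also valid and in fact anticipates Corollary~\ref{cor:canonico}, which the paper proves immediately afterward by lifting to the domain and applying Propositions~\ref{prop:can-dim0} and~\ref{prop:canonico}.
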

\begin{proof}
By \cite[Theorem 6.4 and Corollary 6.5]{multiplicative} (see also the discussion after Definition \ref{defin:star}), there is a bijection between the star operations on $D$ and the star operations on $(D/\mathfrak{m},T/\mathfrak{m})$; in particular, the star operation generated by $I$ correspond to the star operation generated by its image $J$. The claim follows from the definitions.
\end{proof}

\begin{cor}\label{cor:canonico}
Let $B$ be a finite $k$-algebra that is a principal ideal ring, and let $I\in\insfracid_1(k,B)$. Then, $I$ is a canonical ideal of $(k,B,\insfracid_1(A,B))$ if and only if $\ell_k(B/I)=1$ and $I$ does not contain any nonzero $B$-ideal.
\end{cor}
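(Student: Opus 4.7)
The plan is to reduce to the global (Noetherian domain) setting by lifting $I$ through the construction of Proposition \ref{prop:portasu}, and then translate the criterion of Proposition \ref{prop:canonico}.

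First, I would apply Proposition \ref{prop:portasu} to the sequence $\mathbf{s}$ associated to the principal ideal ring $B$, obtaining a one-dimensional local Noetherian domain $(D,\mathfrak{m})$ with integral closure $T$ such that $(D:T)=\mathfrak{m}$, $D/\mathfrak{m}\simeq k$, and $T/\mathfrak{m}\simeq B$. Let $\pi\colon T\longrightarrow B$ be the quotient map, and given $I\in\insfracid_1(k,B)$, define $\widetilde{I}:=\pi^{-1}(I)$. Then $\mathfrak{m}\subseteq\widetilde{I}\subseteq T$, so $\widetilde{I}$ is a fractional ideal of $D$ whose image in $B$ is $I$, and Proposition \ref{prop:can-dim0} applies: $\widetilde{I}$ is a canonical ideal of $D$ if and only if $I$ is a canonical ideal of $(k,B)$.

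Next, I would apply Proposition \ref{prop:canonico} to $\widetilde{I}$: this ideal is canonical if and only if $\ell_D(T/\widetilde{I})=1$ and $\mathfrak{m}$ is the largest $T$-ideal contained in $\widetilde{I}$. Both conditions translate across the quotient by $\mathfrak{m}$. For the length, since $\mathfrak{m}\subseteq\widetilde{I}$ the $D$-module $T/\widetilde{I}\simeq B/I$ is annihilated by $\mathfrak{m}$, hence its $D$-length equals its $k$-length, so $\ell_D(T/\widetilde{I})=\ell_k(B/I)$. For the second condition, the lattice isomorphism between $T$-ideals containing $\mathfrak{m}$ and $B$-ideals (where the zero $B$-ideal corresponds to $\mathfrak{m}$ itself) turns the statement ``$\mathfrak{m}$ is the largest $T$-ideal contained in $\widetilde{I}$'' into ``the only $B$-ideal contained in $I$ is the zero ideal,'' i.e., $I$ contains no nonzero $B$-ideal.

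Combining these two reductions yields the claimed equivalence. The only nontrivial points are the two translations in the last paragraph, which are routine consequences of the correspondence theorem for ideals under the quotient $T\to T/\mathfrak{m}=B$ and the fact that $\mathfrak{m}$ lies in $\widetilde{I}$; no further obstacle is expected.
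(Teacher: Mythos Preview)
Your proof is correct and follows essentially the same approach as the paper: lift via Proposition~\ref{prop:portasu}, invoke Proposition~\ref{prop:can-dim0}, and then translate the two conditions of Proposition~\ref{prop:canonico} through the quotient $T\to T/\mathfrak{m}=B$. The paper's proof is much terser (it simply names the three propositions), whereas you spell out the two translations explicitly; in particular, your observation that any $T$-ideal contained in $\widetilde{I}$ can be enlarged to one containing $\mathfrak{m}$ (since $\mathfrak{m}\subseteq\widetilde{I}$) is exactly what justifies restricting to the lattice of $T$-ideals above $\mathfrak{m}$.
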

\begin{proof}
By Proposition \ref{prop:portasu}, we can find a principal ideal domain $T$ such that $B$ is a quotient of $T$. Setting $D$ to be the counterimage of $k$, we are in the setting of Proposition \ref{prop:can-dim0}, and then we can apply Proposition \ref{prop:canonico}. 
\end{proof}

Given a $k$-algebra $B$, let now $\mathcal{H}(B)$ be the set of all $k$-hyperplanes of $B$ containing $1$, i.e., the set of $I\in\insfracid_1(k,B)$ such that $\ell_k(B/I)=1$. We want to estimate in how many different star operations the elements of $\mathcal{H}(B)$ generate, i.e., in how many classes $\mathcal{H}(B)$ is partitioned.

If $I\in\insfracid(A,B)$, we define $Z(I)$ to be the largest $B$-ideal contained in $I$: this is well-defined since if $L_1,L_2\subseteq I$ are $B$-ideals then also $L_1+L_2$ is a $B$-ideal contained in $I$.
\begin{lemma}\label{lemma:JZI}
Let $I,J\in\insfracid(A,B)$, and let $\princ_I$ be the multiplicative operation generated by $I$. Then, $J+Z(I)\subseteq J^{\princ_I}$.
\end{lemma}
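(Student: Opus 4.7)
The plan is to exhibit an explicit multiplicative operation $c$ that fixes $I$ and satisfies $c(L)\supseteq L+Z(I)$ for every admissible $L$; the maximality built into the definition of $\princ_I$ (as the largest multiplicative operation closing $I$) then forces $c\leq\princ_I$, and evaluating at $L=J$ gives the claim.

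Concretely, I would set $c(L):=L+Z(I)$. The three closure axioms are immediate: $L\subseteq L+Z(I)$; $L_1\subseteq L_2$ implies $L_1+Z(I)\subseteq L_2+Z(I)$; and $(L+Z(I))+Z(I)=L+Z(I)$. Moreover $c$ closes $I$, because $Z(I)\subseteq I$ by the very definition of $Z(I)$, whence $c(I)=I+Z(I)=I$. The essential verification is multiplicativity: for $b\in B$ one must check
\begin{equation*}
c(L:b)=(L:b)+Z(I)\subseteq (L+Z(I):b)=(c(L):b).
\end{equation*}
The inclusion $(L:b)\subseteq(L+Z(I):b)$ follows from $L\subseteq L+Z(I)$, and the remaining inclusion $Z(I)\subseteq(L+Z(I):b)$ amounts to $bZ(I)\subseteq L+Z(I)$, which is immediate because $Z(I)$ is a $B$-ideal and hence $bZ(I)\subseteq Z(I)\subseteq L+Z(I)$.

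Having shown $c$ to be a multiplicative operation that closes $I$, the maximality of $\princ_I$ yields $c\leq\princ_I$, so $J+Z(I)=c(J)\subseteq J^{\princ_I}$, which is the desired statement.

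I do not foresee a substantive obstacle: the argument rests on the single structural fact that $Z(I)$ is a $B$-ideal, which collapses the verification of the multiplicative axiom into a one-line check. The only mild bookkeeping is to make sure that the ambient class of submodules on which $\princ_I$ is defined is stable under $L\mapsto L+Z(I)$; this is clear for each of the families $\insfracid_0$, $\insfracid_\reg$ and $\insfracid_1$ appearing earlier in the paper, since adjoining a submodule can only enlarge $L$ and thus preserves generating $B$, regularity, or containment of $1$.
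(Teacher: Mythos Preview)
Your argument is correct, but it differs from the paper's route. The paper proceeds directly via the explicit description of $\princ_I$: since $J\subseteq J^{\princ_I}$ automatically, it suffices to show $Z(I)\subseteq J^{\princ_I}$, and for $t\in Z(I)$ one has $tB\subseteq Z(I)\subseteq I$, hence $t(I:_BJ)\subseteq I$, i.e.\ $t\in(I:_B(I:_BJ))=J^{\princ_I}$ by the formula already established in Proposition~\ref{prop:JstarI} (and \cite[Lemma~4.4]{multiplicative}).

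Your approach instead packages the same underlying computation into the observation that $L\mapsto L+Z(I)$ is itself a multiplicative operation fixing $I$, and then invokes maximality of $\princ_I$. This is slightly longer (one must verify the closure and multiplicative axioms) but arguably more conceptual: it isolates the structural reason the lemma holds, namely that adding a fixed $B$-ideal is always multiplicative, and it does not rely on the explicit double-colon formula for $J^{\princ_I}$. The paper's proof is shorter precisely because that formula is already in hand; your proof would work equally well in a setting where only the abstract characterization of $\princ_I$ as ``largest multiplicative operation closing $I$'' is available.
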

\begin{proof}
Since $J\subseteq J^{\princ_I}$, we only need to show that $Z(I)\subseteq J^{\princ_I}$. If $t\in Z(I)$, then $tT\subseteq Z(I)\subseteq I$; in particular, $t(I:_BJ)\subseteq I$ and thus $t\in(I:_B(I:_BJ))=J^{\princ_I}$ (the last equality coming from \cite[Lemma 4.4]{multiplicative} and Proposition \ref{prop:JstarI}.
\end{proof}

\begin{prop}\label{prop:classi-iperpiani}
Let $B$ be a finite $k$-algebra that is a principal ideal ring, and let $n:=\ell_k(B)$. Then, the following hold.
\begin{enumerate}[(a)]
\item\label{prop:classi-iperpiani:IJ} If $I,J\in\mathcal{H}(B)$, then $\princ_I=\princ_J$ if and only if $Z(I)=Z(J)$. 
\item\label{prop:classi-iperpiani:num} If $\ell_k(B)=n$, then $\mathcal{H}(B)$ is divided into at most $2^n$ classes.
\end{enumerate}
\end{prop}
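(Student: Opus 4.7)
The plan is to prove part (\ref{prop:classi-iperpiani:IJ}) by descending to the quotient $\overline{B}:=B/Z$ (where $Z:=Z(I)$), in which $I/Z$ becomes a canonical ideal by Corollary \ref{cor:canonico}, and then deduce (\ref{prop:classi-iperpiani:num}) by counting $B$-ideals. The forward direction of (\ref{prop:classi-iperpiani:IJ}) is immediate: if $\princ_I=\princ_J$ then $J=J^{\princ_J}=J^{\princ_I}$, and Lemma \ref{lemma:JZI} yields $Z(I)\subseteq J^{\princ_I}=J$; being a $B$-ideal contained in $J$, maximality forces $Z(I)\subseteq Z(J)$, and swapping $I$ and $J$ gives equality.

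For the converse, suppose $Z:=Z(I)=Z(J)$, write $\overline{B}:=B/Z$ and $\pi:B\to\overline{B}$, and set $\overline{I}:=I/Z$, $\overline{J}:=J/Z$. I aim to show that for every $K\in\mathcal{F}_1(k,B)\setminus\{k\}$,
\begin{equation*}
K^{\princ_I}=K+Z,
\end{equation*}
which depends only on $Z$ and therefore equals $K^{\princ_J}$; together with the trivial case $k^{\princ_I}=k=k^{\princ_J}$, this gives $\princ_I=\princ_J$. The key point is that since $Z\subseteq I$ is a $B$-ideal, $(I:_BL)=(I:_B L+Z)\supseteq Z$ for every submodule $L$, so residuals with respect to $I$ descend through $\pi$; setting $M:=K+Z$, an unwinding gives
\begin{equation*}
K^{\princ_I}=(I:_B(I:_BK))=\pi^{-1}\bigl((\overline{I}:_{\overline{B}}(\overline{I}:_{\overline{B}}\overline{M}))\bigr).
\end{equation*}
Whenever $\overline{M}\neq\overline{k}$, the canonical-ideal identity of Proposition \ref{prop:JstarI} applied to $\overline{I}$ in $(k,\overline{B})$ gives $(\overline{I}:(\overline{I}:\overline{M}))=\overline{M}$, whence $K^{\princ_I}=M=K+Z$.

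The main obstacle is the degenerate case $\overline{M}=\overline{k}$ (i.e., $K\subseteq k+Z$), which is explicitly excluded from that identity. Here $(\overline{I}:\overline{k})=\overline{I}$, so the claim reduces to $(\overline{I}:_{\overline{B}}\overline{I})=\overline{k}$; I plan to prove this by duality. Choose a $k$-linear functional $\phi:\overline{B}\to k$ with $\ker\phi=\overline{I}$ and form the bilinear pairing $\beta(x,y):=\phi(xy)$. Its left kernel equals $\{y:\overline{B}y\subseteq\overline{I}\}=(\overline{I}:_{\overline{B}}\overline{B})$, which is $0$ because $\overline{I}$ contains no nonzero $\overline{B}$-ideal (Corollary \ref{cor:canonico}); hence $\beta$ is non-degenerate. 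Then $(\overline{I}:\overline{I})=\overline{I}^{\perp_\beta}$ has $k$-dimension $\dim_k\overline{B}-\dim_k\overline{I}=1$, and since it contains $\overline{k}$, equals $\overline{k}$.

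For (\ref{prop:classi-iperpiani:num}), part (\ref{prop:classi-iperpiani:IJ}) shows that the equivalence classes in $\mathcal{H}(B)$ inject into the set of ideals of $B$ via $[I]\mapsto Z(I)$. Writing $B=\prod_{i=1}^{t}\ins{F}_{q^{e_i}}[X]/(X^{f_i})$, the total number of ideals of $B$ is $\prod_i(f_i+1)$, and the elementary bound $f_i+1\leq 2^{f_i}\leq 2^{e_if_i}$ yields $\prod_i(f_i+1)\leq 2^{\sum_i e_if_i}=2^n$.
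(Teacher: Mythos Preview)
Your proof is correct and shares the paper's overall plan (quotient by $Z$, use that $\overline I$ is canonical in $\overline B$, then count $B$-ideals), but two points deserve comment.

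For the converse of \ref{prop:classi-iperpiani:IJ}, the paper simply asserts that, because $\overline I$ and $\overline J$ are canonical in $\overline B$, the star operation they induce on $B$ is $L\mapsto L+Z$. You unpack this by tracking residuals through $\pi$ and, importantly, you isolate the edge case $\overline M=\overline k$ (that is, $k\subsetneq K\subseteq k+Z$). In that situation $K^{\princ_I}=(I:I)$, and one genuinely needs $(\overline I:\overline I)=\overline k$; the canonical identity $(\overline I:(\overline I:\overline M))=\overline M$ for $\overline M\neq\overline k$ does not supply this directly, so the paper's passage ``and so in $B$ they generate the map $L\mapsto L+Z(I)$'' is hiding a step that you have made explicit. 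Your duality argument via the pairing $\beta(x,y)=\phi(xy)$ handles this cleanly and in fact yields the reflexivity $(\overline I:(\overline I:L))=L$ for all subspaces $L$ at once, independently of Corollary~\ref{cor:canonico}. One minor citation slip: the fact that $\overline I$ contains no nonzero $\overline B$-ideal is not Corollary~\ref{cor:canonico} but simply the maximality of $Z=Z(I)$ in $I$; the corollary runs the other way.

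For \ref{prop:classi-iperpiani:num}, the paper bounds $\prod_i(f_i+1)$ via the AM--GM inequality together with the monotonicity of $t\mapsto(1+n/t)^t$, whereas your bound $f_i+1\le 2^{f_i}\le 2^{e_if_i}$ is more elementary and gives the same conclusion in one line.
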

\begin{proof}
\ref{prop:classi-iperpiani:IJ} If $Z(I)\neq Z(J)$, then without loss of generality $Z(I)\nsubseteq Z(J)$ and thus $Z(I)\nsubseteq J$. By Lemma \ref{lemma:JZI}, $B=J+Z(I)\subseteq J^{\princ_I}$, so that $J\neq J^{\princ_I}$ and $I$ and $J$ are not in the same class.

Conversely, suppose $Z(I)=Z(J)$, and consider $B':=B/Z(I)$. Then, $I/Z(I)$ and $J/Z(I)$ are both hyperplanes of $B'$ not containing any $B'$-ideal; by Corollary \ref{prop:can-dim0}, they are canonical ideals of $B'$. Hence, they generate the identity star operation on $B'$, and so in $B$ they generate the map $\princ_I:L\mapsto L+Z(I)$. In particular, $I$ and $J$ are in the same class.

\ref{prop:classi-iperpiani:num} Let $B=B_1\times\cdots\times B_t$. The ideals of $B$ are in the form $I_1\times\cdots\times I_t$, where each $I_i$ is either $B_i$ or an ideal of $B_i$. Since $B_i\simeq L_i[X]/(X^{f_i})$ for some extension $L_i$ of $k$, there are $f_i+1$ possible $I_i$, and thus the number of ideals of $B$ is $(f_1+1)\cdots(f_t+1)$. However, 
\begin{equation*}
\prod_{i=1}^t(f_i+1)\leq\left(\frac{(f_1+1)+\cdots+(f_t+1)}{t}\right)^t\leq\left(\frac{n+t}{t}\right)^t=\left(1+\frac{n}{t}\right)^t
\end{equation*}
using the inequality between geometric and arithmetic mean and the fact that $f_1+\cdots+f_t\leq n$. Furthermore, the function $t\mapsto\left(1+\frac{n}{t}\right)^t$ is increasing in $t$, and thus, since $t\leq n$, the number of classes is at most $(1+1)^n=2^n$, as claimed.
\end{proof}

\begin{oss}
~\begin{enumerate}
\item The bound of Proposition \ref{prop:classi-iperpiani} does not depend on the size of $k$.
\item Not all ideals of $B$ are equal to $Z(I)$ for some ideal $I$ of codimension $1$: for example, $B$ itself doesn't. Another case are the subspaces that are themselves hyperplanes: in that case, we should have $I=Z(I)$, which is impossible if $1\in I$.
\item If $n=3$, then the bound of Proposition \ref{prop:classi-iperpiani} gives $|\insstar(k,B)|\leq 65$ for all choices of $B$. By considering in more details the different cases, and excluding the impossible $B$-ideals, it is possible to obtain much tighter bounds; for example, if $B=k^3$ then we have only three possible $Z(I)\neq(0)$ (the ones in the form $I_1\times I_2\times I_3$ with one $I_i$ equal to $k$ and two $I_j$ equal to $0$), and thus $|\insmult_0(k,B)|\leq 1+2^3=9$, which is the exact value of $|\insstar(k,B)|$ by \cite[Theorem 3.1(2)]{houston_noeth-starfinite} and Proposition \ref{prop:ns3}.
\end{enumerate}
\end{oss}

\section{The case $n=4$}\label{sect:n4}
In this section, we are going to prove Conjecture \ref{cong:ns} for $n(\mathbf{s})=4$. In this case, Theorem \ref{teor:liminfsup} gives
\begin{equation*}
0\leq\liminf_{q\to\infty}\theta(\mathbf{s},q)\leq\limsup_{q\to\infty}\theta(\mathbf{s},q)\leq 2,
\end{equation*}
while Proposition \ref{prop:gamman-PVD} says that the limit of $\theta((n,1),q)$ is $1$. Hence, we need to prove that $\theta(\mathbf{s},q)\to 1$ for every $\mathbf{s}$. 

We start from the upper bound.
\begin{lemma}\label{lemma:num-sol}
Let $k$ be a finite field of cardinality $q$, let $\mathbf{s}:=\{(e_1,f_1),\ldots,(e_t,f_t)\}$ be a sequence of pairs of positive integers and let $B$ be the Artinian ring associated to $\mathbf{s}$. Let $e:=\sum_ie_i$ and $n:=\ell_k(B)=\sum_ie_if_i$. Let $\Sigma:=\{\alpha\in B\mid \alpha^2+r\alpha+s=0$ for some $r,s\in k\}$. Then,
\begin{equation*}
|\Sigma|\leq q^{n-e+1}+2^t(q^2-q)\leq q^{n-t+1}+2^t(q^2-q)
\end{equation*}
\end{lemma}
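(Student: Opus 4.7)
The plan is to count $\Sigma$ by a double-counting argument on pairs $(\alpha,(r,s))$ with $\alpha^2 + r\alpha + s = 0$. For each $(r,s) \in k^2$, let $N(r,s)$ denote the number of roots in $B$ of the polynomial $p_{r,s}(Y) := Y^2 + rY + s$. Each $\alpha \in k$ satisfies $p_{r,s}(\alpha) = 0$ for exactly $q$ pairs (one per choice of $r$), while each $\alpha \in \Sigma \setminus k$ satisfies precisely one such equation, corresponding to its (unique) monic minimal polynomial of degree $2$ over $k$. Hence
\[
\sum_{(r,s) \in k^2} N(r,s) \;=\; q^2 + (|\Sigma| - q),
\]
and it suffices to bound $\sum_{(r,s)} N(r,s)$.

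Since $B = B_1 \times \cdots \times B_t$, one has $N(r,s) = \prod_i N_i(r,s)$, where $N_i(r,s)$ counts roots in $B_i$. I would split the sum according to the discriminant of $p_{r,s}$. In the separable case (nonzero discriminant, giving $q^2 - q$ pairs), a root of $p_{r,s}$ in $B_i$ must reduce modulo the maximal ideal to a root in $\ins{F}_{q^{e_i}}$; when the two residue-field roots are distinct, a direct Hensel-type computation (writing a lift as $a + \eta$ with $\eta$ in the maximal ideal and expanding $p_{r,s}(a + \eta)$) shows each residue-field root lifts uniquely. So $N_i \leq 2$ and $N(r,s) \leq 2^t$.

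In the inseparable case (zero discriminant, giving $q$ pairs), one has $p_{r,s} = (Y - a)^2$ for some $a \in k$, handling characteristic $2$ via the fact that every element of a finite field of characteristic $2$ is a square. The roots of $p_{r,s}$ in $B_i$ are then the elements $a + \eta$ with $\eta \in B_i$ and $\eta^2 = 0$; writing $B_i = \ins{F}_{q^{e_i}}[X]/(X^{f_i})$ and tracking $X$-adic valuations, one sees that $\eta^2 = 0$ if and only if $\eta \in (X^{\lceil f_i/2 \rceil})$, a $k$-subspace of cardinality $q^{e_i \lfloor f_i/2 \rfloor}$. Thus $N(r,s) = q^{\sum_i e_i \lfloor f_i/2 \rfloor}$ in this case.

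Summing the two contributions and using $\lfloor f_i/2 \rfloor \leq f_i - 1$ to obtain $\sum_i e_i \lfloor f_i/2 \rfloor \leq n - e$, one gets $\sum_{(r,s)} N(r,s) \leq q^{n - e + 1} + 2^t(q^2 - q)$; absorbing the correction $-q^2 + q \leq 0$ then yields the first inequality. The second inequality follows from $e = \sum_i e_i \geq t$. The delicate point is the inseparable case, where the count genuinely depends on the nilpotent structure of each $B_i$ and both characteristics must be handled uniformly; everything else is routine factoring over the product decomposition.
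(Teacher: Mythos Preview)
Your proof is correct and follows essentially the same route as the paper: split the polynomials $p_{r,s}$ according to whether they have simple or repeated roots in $\overline{k}$, bound the solution set in each local factor $B_i$ separately (by $2$ in the separable case via a Hensel/factoring argument, and by the size of a coset of the maximal ideal in the double-root case), multiply over the factors, and sum over the $q$ double-root and $q^2-q$ simple-root polynomials. Your double-counting identity and your exact count $q^{e_i\lfloor f_i/2\rfloor}$ of square-zero elements are mild refinements over the paper's union bound and its cruder coset bound $q^{e_i(f_i-1)}$, but you discard the extra precision anyway (via $-q^2+q\le 0$ and $\lfloor f_i/2\rfloor\le f_i-1$) to land on the same inequality.
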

\begin{proof}
Let $B=B_1\times\cdots\times B_t$, where each $B_i$ is a local $k$-algebra, and let $n_i:=\ell_k(B_i)$. Let $\pi_i:B_i\longrightarrow L_i$ be the quotient of $B_i$ on its residue field $L_i$.

Let $r,s\in k$, and consider the equation $f(X)=X^2+rX+s$. Let $S_i(f)$ be the set of solutions of $f(X)=0$ in $B_i$, and suppose that $S_i\neq\emptyset$. If $\alpha\in S_i$, then we can factorize $f(X)$ as $(X-\alpha)(X-\beta)$ for some $\beta\in B_i$. We distinguish two cases.

If $f(X)$ has simple roots in the algebraic closure $\overline{k}$ of $k$, then the images of $\alpha$ and $\beta$ in the residue field of $B_i$ are distinct. Hence, for every $\gamma\in S_i$ one of $\pi_i(\gamma-\alpha)$ and $\pi_i(\gamma-\beta)$ is a unit, and thus $(\gamma-\alpha)(\gamma-\beta)=0$ implies that $\gamma=\alpha$ or $\gamma=\beta$. Thus, $|S_i(f)|\leq 2$.

If $f(X)$ has a double root over $\overline{k}$, then all roots of $f(X)$ in $B_i$ must belong to $\pi_i^{-1}(\overline{\alpha})$; since the latter is a coset of an ideal of $B_i$, we have $|S_i(f)|\leq q^{e_i(f_i-1)}=q^{n_i-e_i}$.

Let now $S(f)$ be the set of solutions of $f(X)=0$ in $B$. Then, $S(f)=S_1(f)\times\cdots\times S_t(f)$; thus, if $f(X)$ has simple roots in $\overline{k}$ then $|S(f)|\leq 2^t$, while if $f(X)$ has a double root then $|S(f)|\leq \prod_iq^{n_i-e_i}=q^{n-e}$.

There are exactly $q$ polynomials of degree $2$ with a double root (since $k$ is finite, hence perfect), and thus $q^2-q$ polynomials with single roots. Hence, 
\begin{equation*}
|\Sigma|\leq(q^2-q)\cdot 2^t+q\cdot q^{n-e}=q^{n-e+1}+2^t(q^2-q)\leq q^{n-t+1}+2^t(q^2-q),
\end{equation*}
as claimed.
\end{proof}

\begin{prop}\label{prop:up-numstar-4}
Suppose $n(\mathbf{s})=4$. Then,
\begin{equation*}
\limsup_{q\to\infty}\theta(\mathbf{s},q)\leq 1.
\end{equation*}
\end{prop}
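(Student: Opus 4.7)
The plan is to bound $|\insstar(k,B)|$ by counting the equivalence classes under the relation $I\sim J\iff\princ_I=\princ_J$. Since $I\in\mathcal{F}_1(k,B)$ is $\star$-closed if and only if $\star\leq\princ_I$, two $\sim$-equivalent subspaces are simultaneously $\star$-closed or not under any star operation, so
\begin{equation*}
|\insstar(k,B)|\leq 2^{|[\mathcal{F}_1(k,B)]|}.
\end{equation*}
It therefore suffices to prove $|[\mathcal{F}_1(k,B)]|=O(q)$.

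First I would stratify $\mathcal{F}_1(k,B)$ by $k$-dimension. Since $\ell_k(B)=4$, the possible dimensions are $1,2,3,4$; dimensions $1$ and $4$ each contribute the single classes of $k$ and of $B$, and Proposition \ref{prop:classi-iperpiani} bounds the hyperplanes (dimension $3$) by $2^{4}=16$ classes, independently of $q$. The core of the argument is thus to bound the $\sim$-classes of $2$-dimensional subspaces containing $1$, of which there are $\qbin{3}{1}=q^{2}+q+1$ in total.

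For such an $I=\langle 1,\alpha\rangle$, I would split according to whether $\alpha\in\Sigma$ (the set of Lemma \ref{lemma:num-sol}). If $\alpha\in\Sigma$, then $\alpha^2\in I$ and $I$ is a $k$-subalgebra of $B$; since $\Sigma\setminus k$ is stable under the action $\alpha\mapsto s\alpha+t$ with $(s,t)\in(k\setminus\{0\})\times k$, the number of such subspaces is $|\Sigma\setminus k|/(q(q-1))$. If $\alpha\notin\Sigma$, then $(I:I)=k$, and by analysing $J^{\princ_I}$ via Proposition \ref{prop:JstarI} along the lines of the proof of Proposition \ref{prop:down-numstar-k} (distinguishing units $b$ of $B$ from the zero-divisors $b$ with $bJ\subseteq I$), I expect to show that $J\sim I$ forces $J=uI$ for some unit $u$ of $B$; conversely $\princ_{uI}=\princ_I$ always, since $(uI:b)=u(I:b)$. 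The equivalence class of $I$ then consists of the subspaces $uI$ with $u\in B^\ast$ containing $1$, identified via $uI=vI\iff u/v\in k\setminus\{0\}$; a direct count shows this class contains at least $q$ subspaces, so the non-subalgebra classes number at most $(q^{2}+q+1)/q=O(q)$.

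The main obstacle is to make the subalgebra count also $O(q)$. Lemma \ref{lemma:num-sol} gives $|\Sigma|\leq q^{n-e+1}+2^t(q^2-q)$; for $n=4$ this is $O(q^3)$ in every case except $\mathbf{s}=\{(1,4)\}$, where $B=k[X]/(X^4)$ and the bound degrades to $O(q^4)$. To rescue this case I would refine the double-root estimate in Lemma \ref{lemma:num-sol}: a root $\alpha\in B_i=L_i[X]/(X^{f_i})$ of a double-root quadratic $(X-a)^2$ must satisfy $(\alpha-a)^2=0$ and therefore lies in the coset $a+(X^{\lceil f_i/2\rceil})$, which contains only $q^{e_i\lfloor f_i/2\rfloor}$ elements. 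This yields the sharpened bound $|\Sigma|\leq q^{1+\sum_ie_i\lfloor f_i/2\rfloor}+2^t(q^2-q)$, which is $O(q^3)$ in every case with $n(\mathbf{s})=4$ because $\sum_ie_i\lfloor f_i/2\rfloor\leq n/2=2$ always. Dividing by $q(q-1)$ then gives $O(q)$ subalgebra classes, and combining with the previous estimates produces $|[\mathcal{F}_1(k,B)]|=O(q)$, hence $\theta(\mathbf{s},q)\leq 1+o(1)$ as desired.
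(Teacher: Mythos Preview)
Your approach is essentially the same as the paper's: stratify $\mathcal{F}_1(k,B)$ by dimension, dispatch dimensions $1$, $3$, $4$ cheaply, and concentrate on the $2$-dimensional subspaces, splitting them according to whether they are $k$-subalgebras (your condition $\alpha\in\Sigma$). The paper first restricts to $I(\alpha)$ with $\alpha$ a unit and then makes the analogous split $\mathcal{I}_0$/$\mathcal{I}_1$, but the mechanism is identical. For the non-subalgebra classes you only need the trivial direction $\princ_{uI}=\princ_I$ to get at least $q-O(1)$ subspaces per class (after dividing out the units of $I$ by $(I:I)^\ast=k^\ast$); your stated expectation that $J\sim I$ forces $J=uI$ is stronger than required for the upper bound, and the paper does not establish it either.

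The genuine difference lies in the subalgebra count. The paper applies Lemma~\ref{lemma:num-sol} as stated, obtaining $|\Sigma|\leq q^{n-e+1}+2^t(q^2-q)$; this is $O(q^3)$ whenever $t\geq 2$, and the argument closes directly. For $t=1$ the bound degrades (to $q^4$ in the case $\mathbf{s}=\{(1,4)\}$), so the paper treats the three local cases $\{(4,1)\}$, $\{(2,2)\}$, $\{(1,4)\}$ separately, invoking for the last one the externally computed value of $|\insstar(k[[X^4,X^5,X^6,X^7]])|$. Your refinement of the double-root estimate---observing that $(\alpha-a)^2=0$ in $L_i[X]/(X^{f_i})$ forces $\alpha\in a+(X^{\lceil f_i/2\rceil})$, hence only $q^{e_i\lfloor f_i/2\rfloor}$ solutions rather than $q^{e_i(f_i-1)}$---is correct and gives $|\Sigma|=O(q^{1+n/2})=O(q^3)$ uniformly for $n=4$, since $\sum_i e_i\lfloor f_i/2\rfloor\leq n/2$. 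This yields a cleaner, self-contained proof that avoids the paper's case analysis and its appeal to outside computations. One minor point: the ``at least $q$ subspaces per class'' is really $q-O(1)$ once you account for the up to $t$ maximal ideals that can meet $I$ in a $k$-line, but this does not affect the $O(q)$ conclusion.
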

\begin{proof}
Let $B$ be the ring associated to $\mathbf{s}$, and let $\mathcal{G}:=\mathcal{F}_1(k,B)$. Let $\mathcal{G}_i:=\{I\in\mathcal{G}\mid \dim_kI=i\}$. Then, $\mathcal{G}_0$ is empty, while $\mathcal{G}_1$ and $\mathcal{G}_4$ are both singletons: the former is $\{k\}$, the latter is $\{B\}$. Both are contained in $\mathcal{G}^\star$ for every star operation $\star$ on $(k,B)$, and thus $|\insstar(k,B)|\leq 2^{|[\mathcal{G}_2]|+|[\mathcal{G}_3]|}$.

By Proposition \ref{prop:classi-iperpiani}, $|[\mathcal{G}_3]|\leq 4$.

Consider now $\mathcal{G}_2$. Then, 
\begin{equation*}
|\mathcal{G}_2|=Z_1(q,3)=\qbin{3}{1}=\frac{q^3-1}{q-1}=q^2+q+1.
\end{equation*}
Let $\mathcal{I}$ be the set of the subspaces in the form $\langle 1,\alpha\rangle$, where $\alpha$ is a unit of $B$ not belonging to $k$; clearly, $\mathcal{I}\subseteq\mathcal{G}_2$. By Lemma \ref{lemma:numunits}, $B$ has at least $(q-1)^4$ units; furthermore, every $I(\alpha)$ contains at most $q^2-q$ of them (not counting those in $k$). Thus,
\begin{equation*}
|\mathcal{I}|\geq\frac{(q-1)^4-q}{q^2-q}=\frac{(q-1)^3-1}{q}=q^2-3q+3-\frac{2}{q}\geq q^2-3q+3.
\end{equation*}
In particular, $|\mathcal{G}_2\setminus\mathcal{I}|\leq q^2+q+1-(q^2-3q+3)=4q-2$.

Let $\alpha$ be a unit. Then, $\alpha^{-1}I(\alpha)=\alpha^{-1}\langle 1,\alpha\rangle=\langle 1,\alpha^{-1}\rangle=I(\alpha^{-1})$. We have that $I(\alpha)=I(\alpha^{-1})$ if and only if $\alpha^{-1}\in\langle 1,\alpha\rangle$, that is, if and only if $\alpha$ satisfies an equation of degree $2$ with coefficients in $k$. Since every element of $I(\alpha)$ is in the form $s+t\alpha$ for some $s,t\in k$, we have that $\alpha^{-1}I(\alpha)=I(\alpha)$ if and only if $\beta^{-1}I(\alpha)=I(\alpha)$ for some unit $\beta\in I(\alpha)$. Therefore, $\mathcal{I}$ can be partitioned into two classes:
\begin{itemize}
\item $\mathcal{I}_0:=\{I(\alpha)\mid \alpha^{-1}I(\alpha)=I(\alpha)\}$;
\item $\mathcal{I}_1:=\{I(\alpha)\mid \alpha^{-1}I(\alpha)\neq I(\alpha)\}$;
\end{itemize}

Each element of $\mathcal{I}_1$ contains at least $q^2-2q$ units not in $k$ ($q^2$ elements, minus $q$ in $k$, minus at most $q$ that are not units). Thus, for each subspace there are at least $(q^2-2q)/(q-1)=q-1+\frac{1}{q-1}\geq q-2$ equivalent subspaces, and thus (applying again Lemma \ref{lemma:num-sol}) the number of classes is at most
\begin{equation*}
|[\mathcal{I}_1]|\leq\frac{q^2+q+1-(q+16)}{q-2}\leq q+4.
\end{equation*}

Each subspace in $\mathcal{I}_0$ is equivalent only to himself; to estimate its cardinality, we distinguish two cases.

If $t\geq 2$, by Lemma \ref{lemma:num-sol} there are at most $q^{n-t+1}+2^t(q^2-q)$ elements that are solutions of an equation of degree $2$; hence, they can fill at most
\begin{equation*}
\frac{q^{n-t+1}+2^t(q^2-q)}{q^2-2q}=\frac{q^{n-t}+2^t(q-1)}{q-1}\geq q^{n-t-1}+2^t\geq q+16
\end{equation*}
ideals, i.e., $|\mathcal{I}_0|\leq q+16$. Therefore,
\begin{equation*}
|\insstar(k,B)|\leq 2^{[\mathcal{G}_3]+[\mathcal{G}_2\setminus\mathcal{I}]+[\mathcal{I}_0]+[\mathcal{I}_1]}\leq 2^{4+4q+2+q+16+q+4}=2^{6q+26}
\end{equation*}
and
\begin{equation*}
\limsup_{q\to\infty}\theta(\mathbf{s},q)\leq\log_q\log_2(2^{6q+26})=\log_q(6q+26)=1,
\end{equation*}
as claimed.

Suppose $t=1$. Then, $\mathbf{s}=(e,f)$ with $ef=4$, and thus we have four possibilities: $(4,1)$, $(2,2)$, $(1,4)$. The first one is exactly the one considered in Proposition \ref{prop:gamman-PVD}, and thus the claim holds. The last one corresponds to the star operations on the ring $A:=k[[X^4,X^5,X^6,X^7]]$, for which $|\insstar(A)|=2^{2q+1}+2^{q+1}+2$ (see \cite{white-tesi-sgr}), and thus the limit of $\theta(\mathbf{s},q)$ is again $1$.

The case $\mathbf{s}=\{(2,2)\}$ correspond to $B=L[X]/(X^2)$, where $L$ is the field of cardinality $q^2$. By Lemma \ref{lemma:num-sol}, we have that the number of solutions to equations of degree $2$ is at most $q^{4-2+1}+2(q^2-q)=q^3+2(q^2-q)$. Reasoning as above, we get $|\mathcal{I}_0|\leq q+5$, and so
\begin{equation*}
|\insstar(k,B)|\leq 2^{q^{4+4q+2+q+5+q+4}}=2^{6q+15}.
\end{equation*}
Hence, $\limsup_q\theta(\mathbf{s},q)\leq 1$, as claimed.
\end{proof}

To study the limit inferior, we need to reason by cases. Let $\mathbf{s}=\{(e_1,f_1),\ldots,(e_t,f_t)\}$: if $n_i:=e_if_i$, then the set $\{n_1,\ldots,n_t\}$ is a partition of $n:=n(\mathbf{s})$. In particular, if $n=4$ then we have five possible partitions:
\begin{itemize}
\item $4$;
\item $3+1$;
\item $2+1+1$;
\item $1+1+1+1$;
\item $2+2$.
\end{itemize}
Note that each of the previous partition may correspond to more than one ring $B$. For example, the case $3+1$ is further subdivided into the cases $\mathbf{s}=\{(3,1),(1,1)\}$ (corresponding to $B=L\times k$, where $L$ is an extension of $k$ of degree $3$) and $\mathbf{s}=\{(1,3),(1,1)\}$ (corresponding to $B=k[X]/(X^3)\times k$).

\begin{prop}\label{prop:n4-4}
Preserve the notation above. If $n_1=4$, then $\liminf_q\theta(\mathbf{s},q)\geq 1$.
\end{prop}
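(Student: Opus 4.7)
The plan is to invoke Proposition \ref{prop:down-numstar-loc} directly. The hypothesis $n_1=4$, combined with the fact that the $n_i$'s are positive integers that sum to $n(\mathbf{s})=4$, forces $t=1$, so $\mathbf{s}$ reduces to a single pair $\{(e,f)\}$ with $ef=4$. Proposition \ref{prop:down-numstar-loc} then yields
\begin{equation*}
\theta(\mathbf{s},q)=\theta((e,f),q)\geq ef-3=1
\end{equation*}
for every $q$, and taking the lower limit as $q\to\infty$ gives the claim.

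There is essentially no obstacle here: this case is precisely the one the local lower bound was designed to handle, and in fact the resulting inequality is pointwise in $q$ rather than merely asymptotic. The three concrete subcases $(4,1)$, $(2,2)$, $(1,4)$ (which, by contrast, had to be treated individually in the upper bound of Proposition \ref{prop:up-numstar-4} using Proposition \ref{prop:gamman-PVD}, the computation in \cite{white-tesi-sgr}, and a separate argument for $(2,2)$) are all covered uniformly, since Proposition \ref{prop:down-numstar-loc} makes no distinction among them.
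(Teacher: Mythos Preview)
Your proof is correct and follows exactly the same approach as the paper: since $n_1=4$ and $n(\mathbf{s})=4$, the sequence $\mathbf{s}$ consists of a single pair $(e,f)$ with $ef=4$, and Proposition~\ref{prop:down-numstar-loc} gives $\theta((e,f),q)\geq 4-3=1$ for every $q$. The paper's proof is the same one-line application of Proposition~\ref{prop:down-numstar-loc}; your additional remarks about the three subcases are correct but not needed.
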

\begin{proof}
Apply Proposition \ref{prop:down-numstar-loc}: $\mathbf{s}=\{(e,f)\}$ and thus $\theta((e,f),q)\geq 4-3=1$.
\end{proof}

\begin{prop}\label{prop:n4-r1}
Preserve the notation above. If $e_i=f_i=1$ for some $i$, then $\liminf_q\theta(\mathbf{s},q)\geq 1$.
\end{prop}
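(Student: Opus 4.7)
The key observation is that this is essentially an immediate application of Proposition \ref{prop:down-numstar-k}, once one notes that $\theta(\mathbf{s},q)$ depends only on the isomorphism class of $B_\mathbf{s}$, which is invariant under permutations of the pairs $(e_i,f_i)$: indeed, a direct product of $k$-algebras is commutative up to isomorphism.

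The plan is therefore as follows. By the invariance just described, I would reorder the sequence so that the pair equal to $(1,1)$ appears in the last position; say $(e_t,f_t)=(1,1)$. This has no effect on $\Lambda(\mathbf{s},q)$ or on $\theta(\mathbf{s},q)$, but it makes the quantity $r:=e_tf_t$ appearing in Proposition \ref{prop:down-numstar-k} equal to $1$. Since $n(\mathbf{s})=4$, we get $n-r=3\geq 3$, so the hypothesis of Proposition \ref{prop:down-numstar-k} is satisfied.

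Applying Proposition \ref{prop:down-numstar-k} then gives
\begin{equation*}
\liminf_{q\to\infty}\theta(\mathbf{s},q)\geq n-r-2=4-1-2=1,
\end{equation*}
which is the desired bound. There is no obstacle of substance here: the whole content is the reduction to the already established lower bound, and the main point is simply to observe that one is free to permute the local factors of $B_\mathbf{s}$ so as to push the trivial factor $k$ into the slot that maximizes the yield of Proposition \ref{prop:down-numstar-k}.
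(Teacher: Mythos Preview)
Your proposal is correct and follows essentially the same approach as the paper: both apply Proposition \ref{prop:down-numstar-k} with $r=1$ to obtain $\liminf_q\theta(\mathbf{s},q)\geq 4-1-2=1$. You are simply more explicit about the permutation invariance needed to place the $(1,1)$ factor in the last slot, which the paper takes for granted.
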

\begin{proof}
We can apply Proposition \ref{prop:down-numstar-k} with $r=1$, obtaining $\liminf_q\theta(\mathbf{s},q)\geq 4-1-2=1$.
\end{proof}

The only case remaining is the partition $2+2$. To examine it, we apply a variant of the proof of Proposition \ref{prop:down-numstar-loc}.
\begin{prop}\label{prop:n4-22}
Suppose $\mathbf{s}=\{(e_1,f_1),(e_2,f_2)\}$ with $e_1f_1=e_2f_2=2$. Then,
\begin{equation*}
\liminf_{q\to\infty}\theta(\mathbf{s},q)\geq 1.
\end{equation*}
\end{prop}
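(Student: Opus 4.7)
I would adapt the proof of Proposition~\ref{prop:down-numstar-loc} to this non-local setting. Write $B = B_1 \times B_2$ with each $B_i$ local Artinian of $k$-length $2$, so $B_i$ is either a field of cardinality $q^2$ or isomorphic to $k[X]/(X^2)$; denote the maximal ideals of $B$ by $\mathfrak{m}_1, \mathfrak{m}_2$ and the residue fields of $B_i$ by $L_i$. Consider the family $\mathcal{I}'$ of $2$-dimensional subspaces $I(\alpha) := \langle 1, \alpha \rangle$ where $\alpha = (\alpha_1, \alpha_2)$ has each $\alpha_i$ a unit of $B_i$ not lying in (the image of) $k$, together with a mild ``genericity'' condition that prevents $\alpha$ from lying on a certain ``diagonal'' subring (its precise form depending on the structure of $B$). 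This exclusion is thin: a count shows $|\mathcal{I}'|$ is of order $q^2$.

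The central step is to show that for $I(\alpha), I(\beta) \in \mathcal{I}'$, the principal closure $I(\alpha)^{\princ_{I(\beta)}}$ equals $I(\alpha)$ when $I(\alpha) \sim I(\beta)$ and equals $B$ otherwise. Using $J^{\princ_I} = \bigcap\{(I:b) : J \subseteq (I:b)\}$, one separates the relevant $b$'s into units and non-units. For $b$ a unit, $(I(\beta):b) = b^{-1}I(\beta)$ is $2$-dimensional and contains $I(\alpha)$ only when equal to it. For $b$ a non-unit, we organize by whether $b$ lies in $\mathfrak{m}_1 \setminus \mathfrak{m}_2$, $\mathfrak{m}_2 \setminus \mathfrak{m}_1$, or $\mathfrak{m}_1 \cap \mathfrak{m}_2$, and in each subcase an explicit computation of $(I(\beta):b)$ shows that the containment $I(\alpha) \subseteq (I(\beta):b)$ would force one coordinate of $\alpha$ into $k$, or would force the residues of $\alpha_1, \alpha_2$ in $L_1, L_2$ to satisfy a specific identity involving $\beta$; both are excluded by the genericity hypothesis. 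Hence the intersection defining $I(\alpha)^{\princ_{I(\beta)}}$ is either taken over unit translates of $I(\beta)$ that coincide with $I(\alpha)$ (giving $I(\alpha)$) or is empty (giving $B$).

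With the key step established, the count mirrors Proposition~\ref{prop:down-numstar-loc}. Each $\sim$-class on $\mathcal{I}'$ consists of the subspaces $\gamma^{-1} I(\alpha)$ for units $\gamma \in I(\alpha)$ modulo scalars, and hence has at most $q+1$ elements. Taking $\mathcal{G} \subseteq \mathcal{I}'$ to be a transversal of $\sim$ yields $|\mathcal{G}| \geq |\mathcal{I}'|/(q+1) \sim q$; for any $I \in \mathcal{G}$ and any other $J \in \mathcal{G}$ we have $I^{\princ_J} = B \neq I$ by the key step, so Lemma~\ref{lemma:exponential} gives $|\insstar(k,B)| \geq 2^{|\mathcal{G}|}$. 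Taking logarithms yields $\liminf_{q\to\infty} \theta(\mathbf{s},q) \geq 1$.

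The main obstacle is the case analysis of $(I(\beta):b)$ for non-unit $b$ across the three isomorphism classes of $B$. The case $B \simeq k[X]/(X^2) \times k[Y]/(Y^2)$ is the most delicate, since there $\mathfrak{m}_1 \cap \mathfrak{m}_2$ is nonzero and its elements annihilate both nilpotent parts; coincidences between the parameters of $\beta$ (e.g.\ equal constant terms in its two components) introduce extra sub-subcases that the genericity condition on $\alpha$ must be fine-tuned to rule out.
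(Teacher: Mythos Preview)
Your proposal is correct and follows essentially the same strategy as the paper: restrict to two-dimensional subspaces $I(\alpha)=\langle 1,\alpha\rangle$ with both coordinates units outside $k$ (plus an extra condition in the $k[X]/(X^2)\times k[X]/(X^2)$ case), show that $I(\alpha)^{\princ_{I(\beta)}}$ is either $I(\alpha)$ or $B$, bound each $\sim$-class by $q+1$, and apply Lemma~\ref{lemma:exponential}.

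Two small remarks on presentation. First, the paper organizes the key step a bit differently: rather than splitting non-unit $b$ by which maximal ideal it lies in, it argues directly that $(I(\beta):\gamma)$ can never be $3$-dimensional for $\gamma\in I(\beta)$, using that nonzero elements of $I(\beta)$ have both components nonzero (so if $\gamma_1\neq 0$ it is a unit of the field factor, whence any putative element of $(I(\beta):\gamma)\cap(B_1\times\{0\})$ would force a nonzero element of $I(\beta)$ with a zero component). Your case-by-case computation reaches the same conclusion. Second, your phrase ``a specific identity involving $\beta$'' is slightly misleading: in the delicate case $B_1\simeq B_2\simeq k[X]/(X^2)$ the paper's genericity condition is simply that the residues of $\alpha_1$ and $\alpha_2$ in $k$ be \emph{distinct}, a condition independent of $\beta$. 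Since both $\alpha$ and $\beta$ lie in $\mathcal{I}'$, imposing this on the whole family suffices; you should make sure when filling in details that your condition does not actually depend on $\beta$, or the construction of a single family $\mathcal{I}'$ would break down.
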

\begin{proof}
Let $B=B_1\times B_2$ be the ring associated to $\mathbf{s}$. 

Suppose first that $B_1=L$ is the extension of degree $2$ of $k$. Then, $B_1$ has $q^2-q$ units not belonging to $k$, while $B_2$ has at least $q^2-2q$ of them: hence, $B$ has at least $q^2(q-1)(q-2)$ of these unit. For every such unit $\alpha$ of $B$, let $I(\alpha):=\langle 1,\alpha\rangle$, and let $\mathcal{I}$ be the set of all $I(\alpha)$. No element of $I(\alpha)$ (different from $0$) has a component equal to $0$: otherwise, setting $\alpha=(\alpha_1,\alpha_2)$, we would have $s+r\alpha_i=0$ for some $s,r\in k$ not both $0$, against the fact that $\alpha_i$ is not in $k$. Therefore, each there are $q^2-q$ elements $\beta$ such that $I(\alpha)=I(\beta)$, and so $\mathcal{I}$ has at least $\frac{q^2(q-1)(q-2)}{q^2-q}=q(q-2)$ elements.

Let $\gamma=(\gamma_1,\gamma_2)\in I(\alpha)$: we claim that $J:=(I(\alpha):\gamma)$ cannot have dimension $3$ over $k$. 

If $\gamma_1=0$, then $(0,\gamma_2\alpha_2)\in I(\alpha)$: then we must have $\gamma_2\alpha_2=0$, and since $\alpha_2$ is a unit, this implies that $\gamma_2=0$, i.e., that $\gamma=0$ and thus $J=B$. If $\gamma_1\neq 0$, then $\gamma_1$ is a unit of $L$. If $\dim_kJ>2$, then, $J\cap(B_1\times\{0\})$ must be nontrivial, i.e., there would be some $\beta:=(\beta_1,0)\neq 0$ such that $\beta\in J$. However, $\beta\gamma=(\beta_1,0)(\gamma_1,\gamma_2)=(\beta_1\gamma_1,0)\neq(0,0)$ since $\gamma_1$ is a unit and $\beta_1\neq 0$. Hence, if $\gamma\neq 0$ then $\dim_kJ=2$, while if $\gamma=0$ then $J=B$ and $\dim_kJ=4$.

Hence, $I(\alpha)^{\princ_{I(\beta)}}$ is equal either to $I(\alpha)$ (if $\gamma^{-1}I(\alpha)=I(\beta)$ for some unit $\gamma\in I(\alpha)$) or to $B$; in the former case $I(\alpha)$ and $I(\beta)$ are in the same class, in the latter they belong to different classes. As in the proof of Proposition \ref{prop:down-numstar-loc}, we have $q^2-1$ units $\gamma$ in $I(\alpha)$, and if $t\in k\setminus\{0\}$ then $(t\gamma)^{-1}I(\alpha)=\gamma^{-1}I(\alpha)$, and thus each class contains at most $(q^2-1)/(q-1)=q+1$ ideals of this kind.

Let $\mathcal{G}$ be a set of representatives of the classes of $\mathcal{I}$: then,
\begin{equation*}
[\mathcal{G}]\geq \frac{|\mathcal{I}|}{q+1}\geq\frac{q(q-2)}{q+1}\geq q-3.
\end{equation*}
By Lemma \ref{lemma:exponential}, it follows that $|\insstar(k,B)|\geq 2^{q-3}$, and thus $\liminf_q\theta(\mathbf{s},q)\geq\liminf_q\log_q(q-3)=1$, as claimed.

\medskip

Clearly, if $B_2=L$ then the argument is identical. Suppose thus that $B_1\neq L\neq B_2$: then, $B_1$ and $B_2$ must be isomorphic to $k[X]/(X^2)$. Each $B_i$ has $q^2-2q$ units not belonging to $k$; we choose those units $\alpha=(\alpha_1,\alpha_2)$ such that the image of $\alpha_1$ and $\alpha_2$ in $k$ are distinct. Thus, we obtain $(q^2-2q)(q^2-3q)=q^2(q-2)(q-3)$ possible $\alpha$. The rest of the reasoning proceeds in the same way, obtaining a bound $|\mathcal{G}|\geq q-c$ for some constant $c$, from which the claim follows.
\end{proof}

\begin{teor}\label{teor:n4}
Let $\mathbf{s}$ be a sequence of pairs of positive integers with $n:=n(\mathbf{s})=4$. Then,
\begin{equation*}
\lim_{q\to\infty}\theta(\mathbf{s},q)=1.
\end{equation*}
\end{teor}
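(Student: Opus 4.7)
The plan is to combine the upper bound already established in Proposition \ref{prop:up-numstar-4} with a case analysis on the partition of $4$ induced by the multiset $\{n_i\}_{i=1}^t$, where $n_i := e_if_i$. Since $\limsup_{q\to\infty}\theta(\mathbf{s},q)\leq 1$ holds for every $\mathbf{s}$ with $n(\mathbf{s})=4$ by Proposition \ref{prop:up-numstar-4}, it suffices to prove the matching lower bound $\liminf_{q\to\infty}\theta(\mathbf{s},q)\geq 1$; then the limit exists and equals $1$.

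For the lower bound, the key observation is that the five possible partitions of $4$ are already covered by the auxiliary propositions. First, if the partition is $4$, so that $\mathbf{s}=\{(e,f)\}$ with $ef=4$, Proposition \ref{prop:n4-4} gives $\theta(\mathbf{s},q)\geq 1$ directly (since $n-3=1$). Second, if some part of the partition equals $1$, then necessarily $e_i=f_i=1$ for that index $i$, and Proposition \ref{prop:n4-r1} yields $\liminf_q\theta(\mathbf{s},q)\geq 1$; this handles the partitions $3+1$, $2+1+1$, and $1+1+1+1$. The remaining case is the partition $2+2$, for which $\mathbf{s}=\{(e_1,f_1),(e_2,f_2)\}$ with $e_1f_1=e_2f_2=2$, and Proposition \ref{prop:n4-22} gives exactly $\liminf_q\theta(\mathbf{s},q)\geq 1$.

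The only verification left is that these five partitions exhaust all sequences $\mathbf{s}$ with $n(\mathbf{s})=4$, which is immediate since the partitions of $4$ into positive integers are precisely $4$, $3+1$, $2+2$, $2+1+1$, and $1+1+1+1$; the first is covered by the local case, the middle three by the presence of a unit part, and $2+2$ by the dedicated proposition. Combining the two bounds in each case gives $\lim_{q\to\infty}\theta(\mathbf{s},q)=1$.

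There is no serious obstacle at this stage: all the substantive work — the cardinality estimates for units (Lemma \ref{lemma:numunits}), the count of solutions of quadratic equations (Lemma \ref{lemma:num-sol}), the exponential lower bound via independent principal closures (Lemma \ref{lemma:exponential}), and the classification of hyperplane classes (Proposition \ref{prop:classi-iperpiani}) — has been absorbed into the three lower-bound propositions and the single upper-bound proposition. The potential pitfall would be forgetting a partition type or misidentifying which proposition covers the partition $2+1+1$ (one might be tempted to apply Proposition \ref{prop:n4-22}, but the correct route is Proposition \ref{prop:n4-r1} through the part of size $1$); once the enumeration is laid out explicitly, the conclusion is a two-line citation argument.
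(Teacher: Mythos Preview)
Your argument is exactly the paper's: cite Proposition~\ref{prop:up-numstar-4} for the upper bound and then cover the five partitions of $4$ via Propositions~\ref{prop:n4-4}, \ref{prop:n4-r1}, and \ref{prop:n4-22} for the lower bound. The only blemish is the phrase ``the middle three by the presence of a unit part'' in your final summary---in the order you listed, the middle three include $2+2$, which has no part equal to $1$; your earlier, detailed case split is correct, so just fix that sentence.
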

\begin{proof}
By Proposition \ref{prop:up-numstar-4}, $\limsup_q\theta(\mathbf{s},q)\leq 1$. By Propositions \ref{prop:n4-4}, \ref{prop:n4-r1} and \ref{prop:n4-22} (and the discussion before Proposition \ref{prop:n4-4}), on the other hand, $\limsup_q\theta(\mathbf{s},q)\geq 1$. Hence, the limit exists and is equal to $1$.
\end{proof}

The previous theorem does not allow by itself an explicit determination of the number of star operations on $(k,B)$, mainly because for every $\mathbf{s}$ it is necessary to consider a different number of special cases, which give a different counting. (However, if one follows the proofs in a special case it is possible to obtain some more explicit estimates.) In two cases, the cardinality of $\insstar(k,B)$ has been determined explicitly:  
\begin{itemize}
\item if $\mathbf{s}=\{(4,1)\}$, then $B=F$ (where $F$ is the extension of $k$ of degree $4$) and the star operations on $(k,B)$ correspond to the star operations on the pseudo-valuation domain $A:=k+XF[[X]]$; in this case,
\begin{equation*}
|\insstar(A)|=2^{q+1}+1
\end{equation*}
by the results in \cite{cardinality_pvd} and \cite{pvd-star};
\item if $\mathbf{s}=\{(1,4)\}$ then $B=k[X]/(X^4)$, or in domain terms we are in the residually rational case $A:=k[[X^4,X^5,X^6,X^7]]$; in this case,
\begin{equation*}
|\insstar(A)|=2^{2q+1}+2^{q+1}+2
\end{equation*}
by \cite{white-tesi-sgr}.
\end{itemize}

We now show how to obtain a precise counting in a further special case, namely when $B=L\times k$ and $L$ is the extension of $k$ of degree $3$.
\begin{ex}
Let $B:=L\times k$, and let $\mathcal{G}_2$ and $\mathcal{G}_3$ be, respectively, the set of $k$-subspaces of $B$ having dimension $2$ and $3$ over $k$ that contain $1$.

Let $\mathcal{G}:=\mathcal{F}_0(k,B)=\mathcal{F}_1(k,B)$ (in this case $t=2$ and thus the two sets are equal). On the set $[\mathcal{G}]$ of classes, define $[I]\preceq[J]$ if and only if $\princ_I\geq\princ_J$, that is, if and only if $I=I^{\princ_J}$. Then, $\preceq$ is a partial order on $\mathcal{G}$, and the set $[\mathcal{G}]^\star:=\{[I]\mid I^\star=I\}$ is a downset for every $\star\in\insstar(k,B)$. See \cite[Definition 4.7 onwards]{multiplicative} for more information about this order.

As in the proof of Proposition \ref{prop:up-numstar-4}, let $\mathcal{G}_i$ be the set of $I\in\insfracid_1(k,B)$ having dimension $i$ over $k$.

By Proposition \ref{prop:classi-iperpiani}\ref{prop:classi-iperpiani:IJ}, the class of $I\in\mathcal{G}_3$ depends only on the largest $B$-ideal $Z(I)$ contained into $I$. There are four ideals of $B$: the zero ideal, $\{0\}\times B_2$, $B_1\times\{0\}$ and $B_1\times B_2$. In particular, the latter two cannot be in the form $Z(I)$, since $B=B_1\times B_2$ has dimension $4$, while $B_1\times\{0\}$ has dimension $3$ and does not contain $1$. Hence, we have two classes:
\begin{itemize}
\item $\mathcal{G}_{3,\mathrm{can}}:=\{I\mid Z(I)=\{(0,0)\}\}$: these are the canonical ideals;
\item $\mathcal{G}_{3,1}:=\{I\mid Z(I)=(0)\times B_2\}$.
\end{itemize}
In particular, the elements of $\mathcal{G}_{3,\mathrm{can}}$ close every $k$-subspace, and thus $\mathcal{G}_{3,\mathrm{can}}$ is the maximum of $([\mathcal{G}],\preceq)$. On the other hand, the elements of $\mathcal{G}_{3,1}$ close exactly the subspaces $J$ such that $((0)\times B_2)J\subseteq J$, and the only subspace satisfying this condition is $R(1):=\langle (1,1),(1,0)\rangle=k\times k$. Note that $R(1)$ does not close any other subspace different from $k$, because there is no other subspace containing $(0)\times B_2$. Thus, we have the following partial graph of $[\mathcal{G}]$:
\begin{equation*}
\mathcal{G}_{3,\mathrm{can}}\to \mathcal{G}_{3,1}\to [R(1)]\to [k].
\end{equation*}

Consider now $I\in\mathcal{G}_2$. Then, we can write $I=\langle(1,1),(\alpha,\beta)\rangle$ for some $(\alpha,\beta)\in B$. Since $\beta\in k$, we can suppose without loss of generality that $\beta=0$, i.e., that $I=R(\alpha):=\langle(1,1),(\alpha,0)\rangle$. If $\alpha\in k$ then we have $R(\alpha)=R(1)$ (see above) and thus it is enough to consider the case $\alpha\notin k$.

There are $q^3-q$ of such $\alpha$, and each subspace can be generated by $q-1$ of them; hence, we obtain $q^2+q$ ideals in this form. By the proof of Proposition \ref{prop:down-numstar-k}, every equivalence class of them contains $q$ subspaces, and they are not comparable under $\preceq$. Furthermore, by direct calculation $(I(\alpha):(\alpha,0))=R(1)$ (so $[R(1)]\preceq[I(\alpha)]$) for every $\alpha$, while no element of $\mathcal{G}_{3,1}$ is closed by $I(\alpha)$. Thus, we obtain the following complete picture of $[\mathcal{G}]$:
\begin{equation*}
\begin{tikzcd}[row sep=tiny]
 & \mathcal{G}_{3,1}\arrow{dr}\\
& & {[R(1)]}\arrow[r] & {[A]}\\
\mathcal{G}_{3,\mathrm{can}}\arrow[uur]\arrow{r}\arrow{rdd} & {[R(\alpha_1)]}\arrow{ur}\\
& \vdots\\
& {[R(\alpha_{q+1})]}\arrow[uuur]
\end{tikzcd}
\end{equation*}
Hence, there are $2^{q+2}+2$ nonempty downsets:
\begin{itemize}
\item the whole $[\mathcal{G}]$;
\item $2^{q+2}-1$ downsets in the form $X\cup\{[A],R[1]\}$, for nonempty $X\subseteq\{\mathcal{G}_{3,1},R(\alpha_1),\ldots,R(\alpha_{q+1})\}$;
\item $[A]$ and $[A]\cup[R(1)]$.
\end{itemize}
By direct inspection and by \cite[Proposition 4.10]{multiplicative}, all these downset give rise to a star operation, and thus $|\insstar(k,B)|=2^{q+2}+2$ star operations.
\end{ex}

We note that, in the three cases considered explicitly for $n=4$, the number of star operations is a polynomial $f(X)$ evaluated in $X=2^q$: indeed, for $\mathbf{s}=\{(4,1)\}$ the polynomial is $f(X)=2X+1$, for $\mathbf{s}=\{(1,4)\}$ we have $f(X)=2X^2+2X+2$, and for $\mathbf{s}=\{(3,1),(1,1)\}$ we have $f(X)=4X+2$. This lead to the following, final
\begin{congett}\label{cong:l4}
For every $\mathbf{s}$ such that $n(\mathbf{s})=4$ there is a polynomial $f(X)\inZ[X]$ such that $\Lambda(\mathbf{s},q)=f(2^q)$ for every $q$.
\end{congett}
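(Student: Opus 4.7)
The plan is to settle the conjecture by exhaustive case analysis on the isomorphism type of the Artinian ring $B_\mathbf{s}$. Since $n(\mathbf{s})=4$, there are only eleven possible shapes (three for partition $4$, two for $3+1$, three for $2+2$, two for $2+1+1$, and one for $1+1+1+1$), of which three are already settled: $\mathbf{s}=\{(4,1)\}$ and $\mathbf{s}=\{(1,4)\}$ by the results cited in the discussion after Theorem \ref{teor:n4}, and $\mathbf{s}=\{(3,1),(1,1)\}$ by the worked example. For each of the eight remaining shapes I would carry out the same strategy as in that example: explicitly describe the poset $([\mathcal{G}],\ordine)$ of equivalence classes of $\mathcal{F}_1(k,B_\mathbf{s})$ under $\sim$, and count its downsets.

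The central point is that this poset has a universal shape that does not depend on $q$; only the multiplicities of certain strata do. The dimension-$1$ and dimension-$4$ strata contribute one class each; by Proposition \ref{prop:classi-iperpiani} the dimension-$3$ classes are parametrized by a finite, $q$-independent list of $B$-ideals; and the dimension-$2$ classes, each represented by some $I(\alpha)=\langle 1,\alpha\rangle$ with $\alpha\in B\setminus k$, split into finitely many types according to natural invariants of $\alpha$ (whether $\alpha$ is a unit or a zero-divisor, whether $\alpha$ satisfies a quadratic over $k$---the set $\Sigma$ of Lemma \ref{lemma:num-sol}---and the shape of its image in each local factor $B_i$). The orbit-counting arguments from the proofs of Propositions \ref{prop:down-numstar-loc}, \ref{prop:down-numstar-k} and \ref{prop:n4-22} yield, for each type, a class count that is an explicit polynomial in $q$. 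Using Proposition \ref{prop:JstarI}, one then computes the order $\ordine$ among types by checking, for representative pairs $(I,J)$, whether $I^{\princ_J}=I$; the output is a finite ``type poset'' with polynomial-in-$q$ multiplicities, whose downsets all correspond to genuine star operations by \cite[Proposition 4.10]{multiplicative}. Counting these downsets is elementary---within each stratum any subset is available---and substituting the multiplicities produces an expression of the form $f(2^q)$ for some integer polynomial $f$.

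The main obstacle is twofold. First, the bookkeeping is delicate, and the mixed cases (for example $\mathbf{s}=\{(2,1),(1,2)\}$ or the pure local case $\mathbf{s}=\{(2,2)\}$) are particularly intricate because field-like and nilpotent behaviour interact in the orbit structure on $\mathcal{G}_2$. Second, and more subtly, the conjecture demands equality for \emph{every} $q$, not only asymptotically, so the $O(\,\cdot\,)$ estimates used in earlier sections cannot be invoked: one must track exact class sizes, which in turn requires handling small-$q$ anomalies---for instance when orbit stabilizers degenerate, when the condition $\mathcal{F}_0(k,B)=\mathcal{F}_\reg(k,B)$ fails (as can happen for the partition $1+1+1+1$ when $q<4$), or when the counts in Lemmas \ref{lemma:num-sol} and \ref{lemma:numunits} are not tight. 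For a handful of small-$q$ instances one may ultimately need to verify the formula by direct computation.
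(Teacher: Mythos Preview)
The statement you are addressing is a \emph{conjecture} in the paper, not a theorem: the paper explicitly leaves it open, offering only the three verified instances $\mathbf{s}=\{(4,1)\}$, $\{(1,4)\}$, and $\{(3,1),(1,1)\}$ as evidence. There is therefore no paper proof to compare your proposal against, and what you have written is a research outline rather than a proof.

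As a plan it is sensible, but there is a genuine gap beyond the bookkeeping you already flag. You assert that the orbit-counting arguments produce class counts that are ``polynomial in $q$'' and that substituting these multiplicities into the downset count yields an expression $f(2^q)$. These two statements are not compatible in general: if an antichain in the type poset has $m(q)$ elements, the downsets pick up a factor $2^{m(q)}$, and $2^{m(q)}$ is a polynomial in $2^q$ only when $m(q)$ is an integer-affine function of $q$. A multiplicity that is, say, quadratic in $q$ would give $2^{q^2}$---which is exactly what happens for $n=5$ (the paper quotes the PVD count $(q^2+5)2^{q^2}-(q^2-1)$) and is precisely why the conjecture is restricted to $n=4$. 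So the real content of Conjecture~\ref{cong:l4} is not that the poset has a universal shape with polynomial-in-$q$ multiplicities---the methods of Sections~\ref{sect:codim1} and~\ref{sect:n4} make that much plausible---but that for $n=4$ every relevant antichain size is \emph{affine} in $q$. Your sketch neither isolates this step nor argues for it in any of the eight open cases.

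A secondary gap: you invoke \cite[Proposition~4.10]{multiplicative} as if it guaranteed that every downset of the type poset is realized by a star operation. That proposition gives a criterion to be checked, not a blanket conclusion; in the paper's worked example it is paired with ``direct inspection,'' and in general the intersection of two closed ideals may force a third class to be closed, ruling out some downsets. This too must be verified shape by shape before the downset count can be identified with $\Lambda(\mathbf{s},q)$.
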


This conjecture cannot be generalized to higher lengths: for example, in the case $\mathbf{s}=\{(5,1)\}$, the number of star operations is equal to $(q^2+5)2^{q^2}-(q^2-1)$ \cite[Theorem 4.3]{pvd-star}.

\bibliographystyle{plain}
\bibliography{/bib/articoli,/bib/libri,/bib/miei}
\end{document}